\title[Characterisation of BPLG using projections]{Characterising the big pieces of Lipschitz graphs property using projections}
\author{Henri Martikainen}
\address[H.M.]{Department of Mathematics and Statistics, University of Helsinki, P.O.B. 68, FI-00014 Helsinki, Finland}
\email{henri.martikainen@helsinki.fi}
\thanks{H.M. is supported by the Academy of Finland through the grant
Multiparameter dyadic harmonic analysis and probabilistic methods. T.O. is supported by the Academy of Finland through the grant Restricted families of projections and connections to
Kakeya type problems. Both authors are members of the Finnish Centre of Excellence in Analysis and Dynamics Research.}
\author{Tuomas Orponen}
\address[T.O.]{Department of Mathematics and Statistics, University of Helsinki, P.O.B. 68, FI-00014 Helsinki, Finland}
\email{tuomas.orponen@helsinki.fi}
\subjclass[2010]{28A75}
\keywords{Projections, Lipschitz graphs, Big pieces of sets, Uniform rectifiability}
\newcommand{\R}{\mathbb{R}}
\newcommand{\N}{\mathbb{N}}
\newcommand{\Z}{\mathbb{Z}}
\newcommand{\calH}{\mathcal{H}}
\newcommand{\spa}{\operatorname{span}}
\newcommand{\diam}{\operatorname{diam}}
\newcommand{\Se}{\mathbb{S}}
\numberwithin{equation}{section}
\theoremstyle{plain}
\newtheorem{thm}[equation]{Theorem}
\newtheorem{lemma}[equation]{Lemma}
\newtheorem{proposition}[equation]{Proposition}
\theoremstyle{definition}
\newtheorem{definition}[equation]{Definition}
\newtheorem{stopCondition}[equation]{Stopping condition}
\theoremstyle{remark}
\newtheorem{remark}[equation]{Remark}
\begin{document}
\begin{abstract} We characterise the \emph{big pieces of Lipschitz graphs} property in terms of projections. Roughly speaking, we prove that if a large subset of an $n$-Ahlfors-David regular set $E \subset \R^d$ has plenty of projections in $L^{2}$, then a large part of $E$ is contained in a single Lipschitz graph. This is closely related to a question of G. David and S. Semmes. 
\end{abstract}

\maketitle

\section{Introduction and statement of results}
The purpose of this paper is to characterise the \emph{big pieces of Lipschitz graphs} (BPLG) condition in terms of projections, and only projections. We begin with some definitions, and then formulate the characterisation. After that, we will discuss the context of the result and provide an outline of the proof.

We are concerned with $n$-Ahlfors-David regular sets in $\R^d$:
\begin{definition}[$n$-ADR]
Given $n \in \N$, a set $E \subset \R^d$ is $n$-Ahlfors-David regular ($n$-ADR) if $C_1r^n \le \calH^{n}(E \cap B(x,r)) \le C_2r^n$ for all points $x \in E$ and radii $r \in (0, \diam(E)]$, and some constants  $0 < C_{1} \leq C_{2} < \infty$. The constants $C_{1},C_{2}$ are referred to as the ADR constants of $E$.
\end{definition}
With an $n$-dimensional Lipschitz graph $\Gamma = \Gamma_A$ in $\R^d$ we mean a set of the form
\begin{displaymath}
\Gamma =  \{p + A(p)\colon\, p \in P\},
\end{displaymath}
where $P \subset \R^d$ is an $n$-dimensional subspace and $A\colon P \to P^{\perp}$ is a Lipschitz map. By the \emph{Lipschitz constant of $\Gamma$}, we mean the Lipschitz constant $\textup{Lip}(A)$ of $A$.
Our main object of study is the following subclass of $n$-ADR sets:
\begin{definition}[BPLG]
An $n$-ADR set $E \subset \R^{d}$ has \emph{big pieces of Lipschitz graphs}, if there exist constants $M < \infty$ and $\delta > 0$ with the following property: for every $x \in E$ and $r \in (0,\diam(E)]$, there exists an $n$-dimensional Lipschitz graph $\Gamma_{x,r}$ with Lipschitz constant at most $M$, such that $\calH^{n}(E \cap \Gamma_{x,r} \cap B(x,r)) \ge \delta r^n$.
\end{definition}
The closely related \emph{uniformly rectifiable} sets are defined as follows:
\begin{definition}[$n$-UR]\label{defn:UR}
A closed set $E \subset \R^d$ is $n$-uniformly rectifiable ($n$-UR) if it is $n$-ADR, and there exist constants $\delta > 0$ and $M < \infty$ with the following property:
for every $x \in E$ and $r \in (0, \diam(E)]$ there is a Lipschitz mapping $g\colon B_{\R^n}(0, r) \to \R^d$ such that Lip$(g) \le M$
and \begin{displaymath}
\calH^{n}(E \cap B(x,r) \cap g(B_{\R^n}(0,r)) ) \ge \delta r^n.
\end{displaymath} 
\end{definition}
For the basics of UR and BPLG sets, we refer to the monographs \cite{DS1, DS3} of David and Semmes. Notice that BPLG trivially implies UR. However, the converse is not true by an unpublished
example of T. Hrycak (see the discussion after Theorem \ref{maingen}). Let us also mention
the recent deep geometric result by J. Azzam and R. Schul \cite{AS}, which says that UR = (BP)$^2$LG, that is, UR sets contain big pieces of sets which have BPLG.

We need the following final definitions. For any $1 \le k < d$, denote by $G(d,k)$ the Grassmannian manifold of all $k$-dimensional subspaces of $\R^{d}$, equipped with the distance
$\|V - W\|_{G(d,k)} := \|\pi_V - \pi_W\|$. Here $\pi_V$ is the orthogonal projection onto $V$ and $\| \cdot \|$ is the usual operator norm of linear mappings.
There is also a natural Borel probability measure $\gamma_{d,k}$ on $G(d,k)$ -- see Chapter 3 of \cite{Mat}. Closed metric balls on $G(d,k)$ are usually denoted by $B(V,r)$, or $B_{G(d,k)}(V,r)$, if there is any risk of confusion. 

The next proposition gives an easy necessary condition for a set to have BPLG:
\begin{proposition}\label{LInftyProp}
Assume that $E \subset \R^{d}$ has BPLG. Then, there exist constants $\rho > 0$ and $C < \infty$ (depending only on $M$ and $\delta$ in the definition of BPLG) such that the following holds: for every $x \in E$, and every radius $r \in (0,\diam(E)]$, there exists a subspace $V_{x,r} \in G(d,n)$ and a subset $E_{x,r} \subset E \cap B(x,r)$ with the properties that $\calH^{n}(E_{x,r}) \ge r^n/C$, and
$\pi_{V\sharp}\calH^{n}|_{E_{x,r}} \in L^{\infty}(V)$ for every $V \in B(V_{x,r}, \rho)$ with the uniform bound
\begin{displaymath}
 \|\pi_{V\sharp}\calH^{n}|_{E_{x,r}}\|_{L^{\infty}(V)} \leq C.
 \end{displaymath}
\end{proposition}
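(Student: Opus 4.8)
The plan is to take $E_{x,r}$ to be exactly the big piece of $E$ lying on the Lipschitz graph supplied by the BPLG hypothesis, and to take $V_{x,r}$ to be the base plane of that graph. So fix $x \in E$ and $r \in (0,\diam(E)]$, and let $\Gamma_{x,r} = \{p + A(p) \colon p \in P\}$, with $P \in G(d,n)$ and $\Lip(A) \le M$, be the graph given by BPLG, so that $\calH^{n}(E \cap \Gamma_{x,r} \cap B(x,r)) \ge \delta r^{n}$. Set $V_{x,r} := P$ and $E_{x,r} := E \cap \Gamma_{x,r} \cap B(x,r)$. Then $E_{x,r} \subset E \cap B(x,r)$ is Borel (if $E$ is not Borel, first replace $E_{x,r}$ by a Borel subset of full $\calH^{n}$-measure), and $\calH^{n}(E_{x,r}) \ge \delta r^{n} \ge r^{n}/C$ as soon as $C \ge 1/\delta$. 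Everything then reduces to bounding $\|\pi_{V\sharp}\calH^{n}|_{E_{x,r}}\|_{L^{\infty}(V)}$ for $V$ near $P$.

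The key observation is that, for $V$ sufficiently close to $P$, the map $\psi := \pi_{V} \circ \Phi$ is bi-Lipschitz with constants depending only on $M$, where $\Phi \colon P \to \R^{d}$, $\Phi(p) = p + A(p)$, is the graph parametrisation. First, since $p-q \in P$ and $A(p)-A(q) \in P^{\perp}$ are orthogonal, one has $|p-q| \le |\Phi(p) - \Phi(q)| \le \sqrt{1+M^{2}}\,|p-q|$ and $\pi_{P} \circ \Phi = \mathrm{id}_{P}$. Now put $\rho := (2\sqrt{1+M^{2}})^{-1}$ and let $V \in B(V_{x,r},\rho)$, so $\|\pi_{V} - \pi_{P}\| \le \rho$. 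Writing $\pi_{V}(\Phi(p)-\Phi(q)) = (p-q) + (\pi_{V}-\pi_{P})(\Phi(p)-\Phi(q))$ and using the two displays above,
\begin{displaymath}
\tfrac{1}{2}|p - q| \le \big(1 - \rho\sqrt{1+M^{2}}\big)|p-q| \le |\psi(p) - \psi(q)| \le |\Phi(p) - \Phi(q)| \le \sqrt{1+M^{2}}\,|p-q|.
\end{displaymath}
In particular $\psi$ is injective and $\psi^{-1}\colon \psi(P) \to P$ is $2$-Lipschitz.

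With this in hand the density bound is a bookkeeping exercise with Hausdorff measures. Since $E_{x,r} \subset \Gamma_{x,r} = \Phi(P)$ and $\Phi$ is a bijection onto $\Gamma_{x,r}$, we can write $E_{x,r} = \Phi(G)$ with $G := \pi_{P}(E_{x,r}) \subset P$, and for any Borel $B \subset V$ one checks $E_{x,r} \cap \pi_{V}^{-1}(B) = \Phi(G \cap \psi^{-1}(B))$. Since a Lipschitz map multiplies $\calH^{n}$ by at most the $n$-th power of its Lipschitz constant, and $\psi^{-1}(B) = \psi^{-1}(B \cap \psi(P))$, we get
\begin{displaymath}
\pi_{V\sharp}\big(\calH^{n}|_{E_{x,r}}\big)(B) = \calH^{n}\big(\Phi(G \cap \psi^{-1}(B))\big) \le (1+M^{2})^{n/2}\,\calH^{n}\big(\psi^{-1}(B)\big) \le (1+M^{2})^{n/2}\,2^{n}\,\calH^{n}(B).
\end{displaymath}
As $\calH^{n}$ restricted to the $n$-plane $V$ is Lebesgue measure $\calL^{n}$ on $V$, this says $\pi_{V\sharp}(\calH^{n}|_{E_{x,r}}) \le (2\sqrt{1+M^{2}})^{n}\,\calL^{n}|_{V}$, hence $\pi_{V\sharp}(\calH^{n}|_{E_{x,r}}) \in L^{\infty}(V)$ with norm at most $(2\sqrt{1+M^{2}})^{n}$. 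The proposition follows with $\rho := (2\sqrt{1+M^{2}})^{-1}$ and $C := \max\{1/\delta,\, (2\sqrt{1+M^{2}})^{n}\}$, both of which depend only on $M$, $\delta$ (and $n$).

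There is no real obstacle here: the statement is essentially elementary. The only point requiring a little care is that the bi-Lipschitz bounds for $\psi$ must stay \emph{uniform} as $V$ ranges over the whole ball $B(V_{x,r},\rho)$ — this is exactly what pins down the choice of $\rho$ — together with the (routine) verification that the constants one extracts depend on $M$ and $\delta$ alone and not on $x$, $r$, or the particular graph $\Gamma_{x,r}$.
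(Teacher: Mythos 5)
Your proof is correct and follows essentially the same route as the paper: the same choices $E_{x,r} = E \cap B(x,r) \cap \Gamma_{x,r}$, $V_{x,r} = P$ and $\rho \sim (1+M)^{-1}$, with the key point in both arguments being that $\pi_{V}$ distorts distances on the graph by factors depending only on $M$ for every $V \in B(P,\rho)$. The only (harmless) difference is the final step: the paper re-graphs $\Gamma_{x,r}$ over $V$, bounds the density of $\pi_{V\sharp}\calH^{n}|_{E_{x,r}}$ on balls and invokes Mattila's Theorem 2.12, whereas you dominate the pushforward by $(2\sqrt{1+M^{2}})^{n}\,\calL^{n}|_{V}$ on all Borel sets via the bi-Lipschitz bound for $\pi_{V}\circ\Phi$, which yields the $L^{\infty}$ bound without any differentiation theorem.
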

\begin{proof}
Fix $x \in E$, $0 < r \leq \diam(E)$, and let $E_{x,r} = E \cap B(x,r) \cap \Gamma_{x,r}$, where $\Gamma_{x,r} = \{p + A(p)\colon\, p \in P\}$ is given by the BPLG condition, and Lip$(A) \le M$. Then, set $V_{x,r} = P$ and $\rho = [2(1+M)]^{-1}$. If $V \in B(V_{x,r}, \rho)$ one can also write
$\Gamma_{x,r} = \{v + A_V(v)\colon\, v \in V\}$, where $A_V\colon V \to V^{\perp}$ and Lip$(A_V) \le 3(1+M)$.

Suppose that $V$ is as above, $v \in V$ and $s > 0$. Then
\begin{displaymath}
\pi_V^{-1}(B_V(v,s)) \cap \Gamma_{x,r} \subset B(v+A_V(v), 4(1+M)s),
\end{displaymath}
which implies that 
\begin{displaymath}
\frac{ \pi_{V\sharp}\calH^{n}|_{E_{x,r}}(B_V(v,s))}{ \mathcal{H}^n(B_V(v,s))} \lesssim 1.
\end{displaymath}
The proposition now follows from Theorem 2.12 of \cite{Mat}.
\end{proof}
\begin{remark}
We write $A \lesssim_{p} B$ if $A \le CB$ for some constant $C > 0$ depending only on the parameter $p$; the notation $A \lesssim B$ means that the constant $C$ is absolute, or depends only on parameters, which can be regarded as "fixed" in the situation. The two-sided inequality $B \lesssim_{p} A \lesssim_{p} B$ is abbreviated to $A \sim_{p} B$.
\end{remark}

The main result of the paper asserts that the necessary condition for BPLG in Proposition \ref{LInftyProp} is also sufficient, and the uniform $L^{\infty}$-bound for the projections can even be relaxed to an averaged bound for the $L^{2}$-norms:
\begin{thm}\label{maingen}
Let $E \subset \R^d$ be an $n$-ADR set. Suppose that there exist constants $\kappa > 0$ and $C < \infty$ such that the following holds. For every $x \in E$ and $r \in (0, \diam(E)]$, there is an $\calH^{n}$-measurable subset $E_{x,r} \subset E \cap B(x,r)$ and a subspace $V_{x,r} \in G(d,n)$ with the following properties:
\begin{enumerate}
\item $\mathcal{H}^n(E_{x,r}) \ge \kappa r^n$,
\item $\pi_{V \sharp}\calH^{n}|_{E_{x,r}} \in L^2(V)$ for $\gamma_{d,n}$-a.e. $V \in B(V_{x,r}, \kappa)$, and
\begin{displaymath}
\int_{B(V_{x,r}, \kappa)} \|\pi_{V \sharp}\calH^{n}|_{E_{x,r}}\|_{L^2(V)}^{2} \, d\gamma_{d,n}(V) \le Cr^n. 
\end{displaymath}
\end{enumerate}
Then $E$ has BPLG. In particular, $E$ is $n$-UR.
\end{thm}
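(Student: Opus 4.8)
The plan is to verify the BPLG condition directly. Fix $x_0 \in E$ and $0 < r_0 \le \diam(E)$; by scale invariance of all the hypotheses we may assume $r_0 = 1$, and we fix a dyadic cube $Q_0 \in \calD(E)$ with $x_0 \in Q_0$ and $\diam(Q_0) \sim 1$. The goal is to produce a single $n$-dimensional Lipschitz graph $\Gamma$ with $\Lip$-constant $M = M(\kappa,C,n,d)$ such that $\calH^n(E \cap \Gamma \cap B(x_0,1)) \ge \delta$ for some $\delta = \delta(\kappa,C,n,d) > 0$; this is BPLG, and BPLG trivially yields $n$-UR (Definition \ref{defn:UR}). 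Set $E_0 = E_{x_0,1}$ and $V_0 = V_{x_0,1}$. Chebyshev in hypothesis (2) gives $\Theta_0 \subset B(V_0,\kappa)$ with $\gamma_{d,n}(\Theta_0) \gtrsim \gamma_{d,n}(B(V_0,\kappa))$ on which $\|\pi_{V\sharp}\calH^n|_{E_0}\|_{L^2(V)}^2 \le C_1 = C_1(\kappa,C,n,d)$; writing $f_V$ for the density of $\pi_{V\sharp}\calH^n|_{E_0}$ with respect to $\calH^n|_V$, a Fubini estimate on $E_0 \times \Theta_0$ followed by Chebyshev in the $E_0$-variable then produces $\Lambda = \Lambda(\kappa,C,n,d)$ and a subset $F_0 \subset E_0$ with $\calH^n(F_0) \ge \kappa/2$ such that $\gamma_{d,n}(\{V \in \Theta_0 : f_V(\pi_V z) > \Lambda\}) < \tfrac14\gamma_{d,n}(\Theta_0)$ for every $z \in F_0$. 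This upgrades the averaged $L^2$ bound to a pointwise density bound on a large piece of $E_0$, and it is scale invariant, since the density of $\pi_{V\sharp}\calH^n|_{F_0\cap B}$ is dominated by $f_V$ for every ball $B$. Two consequences will drive the rest of the argument, both flowing from this density bound together with the fact that the good directions fill a genuine $\gamma_{d,n}$-ball around $V_0$: (a) a piece of $F_0$ of measure $\gtrsim t^n$ lying $\varepsilon t$-close to an affine $n$-plane too far from $V_0$ would, under $\pi_V$ for a positive-measure family of $V$ near $V_0$, collapse onto a negligible subset of $V$, forcing $f_V \gg \Lambda$ there; (b) $k$ roughly parallel ``sheets'' of $F_0$ stacked over a common cube contribute $\sim k^2$ to $\int f_V^2$. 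Thus the hypothesis simultaneously pins the local directions of $F_0$ to within some fixed $\alpha_0 = \alpha_0(\kappa,C,n,d) < \pi/2$ of $V_0$ and limits the branching of $F_0$.

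Next I would run a stopping-time decomposition of $\{Q \in \calD(E) : Q \subset Q_0\}$ into a coherent stopping-time region $\calS \ni Q_0$ and a family of stopped cubes, attaching to each $Q \in \calS$ a principal $n$-plane $V_Q$ for $F_0 \cap Q$ (with $V_{Q_0} \sim V_0$) and the barycentre $p_Q$. One stops at $Q$ — declaring its children stopped and outside $\calS$ — if either (i) $\calH^n(F_0 \cap Q) < \varepsilon_0 \ell(Q)^n$ (\emph{thinness}), or (ii) the principal direction has turned past $\theta_0$ from $V_0$, or $F_0 \cap Q$ splits into two substantial, well-separated sheets (\emph{loss of coherence}). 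By consequence (a), for $Q \in \calS$ the plane $V_Q$ stays within $\alpha_0$ of $V_0$ (take $\theta_0 \le \alpha_0$), so inside $\calS$ the $\{V_Q\}$ form a genuinely coherent family, varying slowly along $\calS$ and confined to a fixed $\theta_0$-cone about $V_0$; since $E \cap Q \subset B(p_Q, C\ell(Q))$ automatically, $(\calS, \{V_Q\}, \{p_Q\})$ is data of the kind from which David and Semmes build a Lipschitz graph.

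The main obstacle is the packing estimate: one must show the stopped cubes are scarce enough that $Z := F_0 \setminus \bigcup\{Q : Q \text{ stopped}\}$, the part of $F_0$ that never leaves $\calS$, still has $\calH^n(Z) \ge \kappa/4$. Thinness stops are immediate: being pairwise disjoint subcubes of $Q_0$, $\sum \calH^n(F_0 \cap Q) < \varepsilon_0 \sum \ell(Q)^n \lesssim \varepsilon_0 \calH^n(E \cap Q_0) \lesssim \varepsilon_0 \ll \kappa$. The coherence stops are the heart of the matter: one must convert the $L^2$-projection bound into a geometric lemma for $F_0$,
\[ \sum_{Q \subset Q_0} \operatorname{osc}(Q)^2 \, \calH^n(F_0 \cap Q) \;\lesssim_{\kappa,C,n,d}\; 1, \]
where $\operatorname{osc}(Q)$ records the turning of the principal direction across the children of $Q$ and any branching, the mechanism being exactly consequences (a)--(b): a turn or a branch at $Q$ forces the descendants of $Q$ to project, for a positive-$\gamma_{d,n}$-measure family of directions near $V_0$, onto parts of $V$ already carrying $F_0$-mass, which registers in $\int f_V^2$ and is charged to $C_1$. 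Granting such a lemma, the coherence stops carry total $F_0$-mass $\lesssim \theta_0^{-2}$ times the right-hand side, which one forces below $\kappa/4$ by a suitable choice of $\theta_0 < \pi/2$. The genuinely delicate point, where most of the work lies, is the \emph{quantitative} form of this step: a crude Carleson bound produces a constant far larger than $\kappa$, so to close the estimate one must exploit the full strength of the hypothesis — an $L^2$ (not merely $L^1$) bound, averaged over an entire ball of directions.

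Finally, with $\calH^n(Z) \ge \kappa/4$ in hand, the David--Semmes construction (see \cite{DS1}, \cite{DS3}) applies: $\calS$ being a coherent stopping-time region whose associated planes lie in a fixed $\theta_0$-cone about $V_0$ and vary slowly along $\calS$, there is a Lipschitz map $A \colon V_0 \to V_0^{\perp}$ with $\Lip(A) \lesssim \tan\theta_0 =: M$ whose graph $\Gamma = \Gamma_A$ contains $Z$ up to an $\calH^n$-null set (the points remaining in $\calS$ forever have zero distance to $\Gamma$). Then $\calH^n(E \cap \Gamma \cap B(x_0,1)) \ge \calH^n(Z) \ge \kappa/4$, with $M$ and $\kappa/4$ depending only on $\kappa, C, n, d$. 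As $(x_0,r_0)$ were arbitrary, $E$ has BPLG, hence is $n$-UR. A more hands-on route — extracting a maximal ``non-steep'' subset of $F_0$ directly from the projections by an independent-set argument — seems to require essentially the same geometric lemma in order to control the loss across all scales.
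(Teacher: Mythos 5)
Your outline is not a proof: its central step is asserted rather than established, and it is precisely the step that is hard. The Carleson-type packing estimate $\sum_{Q \subset Q_0} \operatorname{osc}(Q)^2\,\calH^n(F_0 \cap Q) \lesssim_{\kappa,C,n,d} 1$, which you need to control the coherence stops, is exactly a weak-geometric-lemma-type statement, and deriving such control from projection data alone is the crux of the David--Semmes problem: their known result characterises BPLG as BPPD \emph{plus} the weak geometric lemma, the latter being an extra regularity hypothesis not known to follow from projections. You acknowledge that "the genuinely delicate point, where most of the work lies" is the quantitative form of this estimate, but you offer only the heuristics (a)--(b); nothing in the proposal shows how a turn or a branch at a cube $Q$ of scale $\ell(Q)$ gets charged to $\int f_V^2$ with constants that sum over all scales and locations to something $\lesssim C_1$. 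In addition, even granting the packing estimate, the final appeal to the David--Semmes corona construction needs more than "principal directions confined to a $\theta_0$-cone and slowly varying": to conclude that the never-stopped set $Z$ lies in a Lipschitz graph one needs a scale-by-scale flatness/approximation property (the set $F_0 \cap Q$ lying within $\varepsilon\ell(Q)$ of $p_Q + V_Q$ for $Q \in \calS$), and your stopping rules (thinness, turning, branching) do not produce it: a piece can avoid being close to any badly-tilted plane, as in your consequence (a), while not being close to any plane at all.

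For comparison, the paper avoids the corona machinery and any geometric-lemma-type estimate altogether. It first shows (Lemma \ref{mainLemma}) that the averaged $L^2$ bound forces a large subset $E_2$ to satisfy a purely combinatorial cone-counting condition: for each $x \in E_2$ at most $M_0$ dyadic annular cone shells $X(x,\theta_0,2^{-j},2^{-j-1})$ meet $E_2$. The mechanism is an incidence count of triples $(x,y,V)$ with $|\pi_V(x-y)|\le\delta$ --- a rigorous version of your heuristic (b), with the lower bound supplied by a Grassmannian measure estimate (Lemma \ref{GrassmanLemma}) and the upper bound by the $L^2$ hypothesis and the maximal function. Then Proposition \ref{mainProposition}, via a deterministic deletion algorithm in co-dimension one, upgrades the $(\theta_0,M_0)$-property to the $(\theta_0/b,0)$-property on a still-large subset, which by Kirszbraun (Remark \ref{trivialRemark}) is literally containment in a Lipschitz graph. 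Your route would only become a proof if you actually proved the packing estimate, which is an open-problem-level obstacle that the paper's argument is specifically designed to sidestep.
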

We now discuss the context and history of the result. The Besicovitch--Federer projection theorem is a characterisation of rectifiability in terms of projections. Since
the introduction of uniform rectifiability, it has been a natural question to find a quantitative analog of the Besicovitch--Federer result for UR sets.
However, an unpublished example of T. Hrycak shows that uniform rectifiability does not imply quantitatively large projections, at least in the obvious sense: given $\epsilon > 0$, Hrycak's construction produces a UR-set $E \subset \R^{2}$, with constants independent of $\epsilon$, such that $\calH^1(\pi_{L}(E)) \leq \epsilon$ for every line $L \in G(2,1)$.

The slightly stronger condition BPLG, however, does imply quantitatively large projections (this is well--known but also follows from Proposition \ref{LInftyProp}), and so characterising BPLG in terms of projections
seems to be a more natural question to ask.
Perhaps the most obvious candidate for such a characterisation is through the \emph{big projections in plenty of directions} (BPPD) assumption.
An $n$-ADR set $E \subset \R^{d}$ has BPPD, if there exists a constant $\delta > 0$ with the following property: for every $x \in E$ and $r \in (0, \diam(E)]$, there is an $n$-plane $V_{x,r} \in G(d,n)$ such that
\begin{equation}\label{BPPD}
\calH^n( \pi_{V}( E \cap B(x,r) )) \ge \delta r^n
\end{equation}
for every $V \in B(V_{x,r}, \delta)$.
Indeed, G. David and S. Semmes ask in \cite{DS1, DS2} whether BPLG is equivalent to BPPD. This remains open to date. It is clear that BPLG implies BPPD. In the converse direction, the quantitative Besicovitch projection theorem of T. Tao \cite{Ta} yields some structural information about BPPD sets, but the conclusions are weaker than BPLG.

It is also known, see \cite{DS2}, that BPLG is characterised by a combination of BPPD and an extra hypothesis called the \emph{the weak geometric lemma} -- an additional regularity assumption not connected with projections. In contrast, our result is the first to characterise BPLG using projections, and projections only. 

Let us briefly see how the BPPD hypothesis is connected with our assumptions. To this end, suppose that a set $E' \subset E \cap B(x,r)$ satisfies $\calH^{n}(E') \gtrsim r^n$, $\pi_{V \sharp}\calH^{n}|_{E'} \in L^2(V)$
and $\|\pi_{V \sharp}\calH^{n}|_{E'}\|_{L^2(V)}^{2} \lesssim r^n$ for some $V \in G(d,n)$. Then,
\begin{displaymath}
r^{2n} \lesssim \|\pi_{V \sharp}\calH^{n}|_{E'}\|_{L^1(V)}^{2} \le \calH^n(\pi_{V}(E')) \cdot \|\pi_{V \sharp}\calH^{n}|_{E'}\|_{L^2(V)}^{2} \lesssim \calH^n(\pi_{V}(E \cap B(x,r))) \cdot r^{n},
\end{displaymath}
which implies \eqref{BPPD} for this particular $V$. Therefore, for those $V$ in Theorem \ref{maingen} such that $\|\pi_{V\sharp}\mathcal{H}^{n}|_{E_{x,r}}\|_{L^2(V)}^{2} \lesssim r^n$, our hypothesis is strictly stronger than \eqref{BPPD}; on the other hand, our "averaged" hypothesis is more relaxed than the uniform requirement of BPPD. 

\subsection{Outline of the proof}
After a suitable translation, scaling and rotation, the proof of Theorem \ref{maingen} reduces to verifying the following statement:
\begin{thm}\label{main} Let $E_{0} \subset \R^d$ be an $n$-ADR set, and assume that $E_1 \subset E_{0} \cap B(0,1)$ is an $\calH^{n}$-measurable subset satisfying the following two properties:
\begin{itemize}
\item[(i)] $\calH^{n}(E_{1}) \ge \kappa > 0$, and
\item[(ii)]
it holds that $\pi_{V \sharp}\calH^{n}|_{E_1} \in L^2(V)$ for $\gamma_{d,n}$-a.e. $V \in B(\R^n, \kappa)$, and 
\begin{displaymath}
\int_{B(\R^n, \kappa)} \|\pi_{V \sharp}\calH^{n}|_{E_1}\|_{L^2(V)}^{2} \, d\gamma_{d,n}(V)  \le C < \infty.
\end{displaymath}
\end{itemize}
Then, there exists a Lipschitz function $A\colon \R^n \to \R^{d-n}$ such that Lip$(A) \lesssim_{\kappa, C} 1$ and
the Lipschitz graph
\begin{displaymath}
\Gamma = \{(x, A(x))\colon x \in \R^n\}
\end{displaymath}
satisfies
\begin{displaymath}
 \quad \calH^{n}(E_{1} \cap \Gamma) \gtrsim_{\kappa,C} 1. 
\end{displaymath}
\end{thm}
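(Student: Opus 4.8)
The goal is to build a single Lipschitz graph capturing a positive fraction of $E_1$. The natural strategy is a stopping-time / corona-type decomposition of the dyadic (or David-type) cubes of $E_0$ intersecting $E_1$, where the $L^2$-projection hypothesis provides the crucial control at each scale. First I would fix a system of David cubes $\calD$ adapted to $E_0$, and start from a top cube $Q_0$ of unit size. The hypothesis (ii), combined with a Chebyshev argument, says that for a $\gamma_{d,n}$-large set of planes $V$ near $\R^n$, the pushforward $\pi_{V\sharp}\calH^n|_{E_1}$ lies in $L^2$ with the stated bound; the $L^2$-bound on projections should translate, via a standard duality/$TP$-type estimate or via Chebyshev on dyadic intervals in $V$, into the statement that $E_1$ is ``spread out'' in most horizontal directions — i.e. it cannot concentrate too heavily on any tube $\pi_V^{-1}(I)$ for a small interval $I$. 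This is the quantitative substitute for the Besicovitch--Federer mechanism.

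**Key steps.** (1) From (ii) extract, for $\gamma_{d,n}$-a.e.\ $V$ in a large ball, the bound $\calH^n(E_1 \cap \pi_V^{-1}(S)) \lesssim \|\pi_{V\sharp}\calH^n|_{E_1}\|_{L^2} \cdot \calH^n(S)^{1/2}$ for measurable $S \subset V$, so thin slabs carry little mass. (2) Run a stopping-time argument: declare a cube $Q$ to be a stopping cube if the ``graph direction'' of $E_1 \cap Q$ has tilted too far from horizontal, i.e.\ if there exist two points of $E_1 \cap Q$ (or two subcubes of comparable mass) whose difference vector makes a large angle with $\R^n$. The main point is to show that the total $\calH^n$-measure of $E_1$ lying inside stopping cubes (summed over a maximal antichain of them) is a small fraction of $\calH^n(E_1)$, so that a positive-measure ``good'' part $G \subset E_1$ survives down to scale zero. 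Control of the stopping cubes is exactly where the projection hypothesis enters: if too much mass of $E_1$ had large vertical oscillation inside many disjoint cubes, then for a positive-measure set of nearly-horizontal directions $V$ (using that $B(\R^n,\kappa)$ has positive $\gamma_{d,n}$-measure) many such cubes would project to overlapping small slabs, forcing $\pi_{V\sharp}\calH^n|_{E_1}$ to be large in $L^2$ and contradicting (ii). (3) On the good part $G$, the verticality bound at every scale means $G$ is contained in (or within $C$ times) the graph of a function $A$ defined on $\pi_{\R^n}(G)$; a standard Whitney/McShane Lipschitz extension then produces $A\colon \R^n \to \R^{d-n}$ with $\mathrm{Lip}(A) \lesssim_{\kappa,C} 1$, and $\calH^n(E_1 \cap \Gamma) \geq \calH^n(G) \gtrsim_{\kappa,C} 1$. (One must be a little careful: ``$G$ sits near a graph'' must be upgraded to ``$G$ sits in an actual graph''; this is handled by passing from $G$ to a subset of density one in the graph direction, or by observing that the verticality condition at all scales forces the function to be genuinely single-valued on a large subset.)

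**Main obstacle.** The crux is Step (2): converting the averaged $L^2$-bound on projections into an effective bound on the measure of stopping cubes. The difficulty is that a single plane $V$ is not enough — the hypothesis only controls an \emph{average} over $V \in B(\R^n,\kappa)$, so the argument must exploit that a large vertical oscillation of $E_1$ inside a cube $Q$ is ``visible'' not just from one exceptional $V$ but from a whole positive-$\gamma_{d,n}$-measure family of directions simultaneously, and then integrate the resulting lower bounds for $\|\pi_{V\sharp}\calH^n|_{E_1}\|_{L^2}^2$ over that family. Making this quantitative — i.e.\ showing that ``tilted cube'' $\Rightarrow$ ``definite contribution to the projection $L^2$-norm for a definite fraction of directions'' — while keeping the geometry of overlapping projected slabs under control across scales, is the technical heart of the proof. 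A secondary subtlety is that one cannot simply sum contributions cube-by-cube because a point may project into slabs coming from many ancestors; this is where a Carleson-packing formulation of the stopping cubes (rather than a naive sum) is needed, with the projection bound feeding the Carleson condition.
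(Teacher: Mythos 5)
There is a genuine gap, and it sits exactly at the point your outline treats as routine. Your Step (2) asks the averaged $L^2$-projection bound to show that the union of the maximal stopping cubes (``tilted'' cubes) has small $\calH^{n}$-measure, so that a good set $G$ ``survives down to scale zero''. The hypothesis (ii) cannot deliver this. What an incidence/energy argument of this type actually yields is a Chebyshev-type bound: the set of points of $E_{1}$ witnessing at least $M$ bad (steep) scales has measure $\lesssim_{\kappa,C} 1/M$ (this is precisely Lemma \ref{mainLemma} of the paper, proved by counting triples $(x,y,V)$ with $|\pi_{V}(x-y)|\le\delta$, using the density purge of Lemma \ref{auxLemma} and the Grassmannian estimate of Lemma \ref{GrassmanLemma} for the lower bound, and the $L^{2}$ hypothesis via a maximal function for the upper bound). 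So after discarding a small exceptional set you only know that \emph{every} remaining point has at most $M_{0}$ bad scales, with $M_{0}=M_{0}(\kappa,C)$ a possibly huge constant; you cannot force $M_{0}=0$, nor make the union of stopping cubes small, since each point is allowed its quota of bad scales and these may occur already at the top scale for every point. Your fallback in the ``Main obstacle'' paragraph --- recasting the stopping cubes as a Carleson packing condition --- does not rescue the argument either: a Carleson packing condition on stopping cubes is the corona-type information that characterises UR, and by Hrycak's example UR does not imply BPLG; in a corona decomposition the stopping region containing the top cube may die out after finitely many generations, so no positive-measure set need survive to scale zero. In short, ``bounded number of bad scales per point'' (or a packing condition on bad cubes) is genuinely weaker than ``no bad scales'', and only the latter gives containment in a single Lipschitz graph (Remark \ref{trivialRemark}).

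The missing idea is therefore the entire second half of the paper: Proposition \ref{mainProposition} and Proposition \ref{mainProp2}, which upgrade the $(\theta_{0},M_{0})$-property to the $(\theta_{0}/b,0)$-property on a further subset of comparable measure. This is done by first covering the vertical cone by boundedly many one-dimensional one-sided cones (Lemma \ref{reductionLemma}), and then running, for each direction, a carefully designed deletion algorithm: one repeatedly locates a ``bad'' point with minimal last coordinate, saves a ball's worth of bad points around it, and deletes the (comparably sized) set of points lying in the relevant conical annuli above the saved ball, checking that saved sets are never later deleted and that each pass strictly reduces the number of admissible bad scales. Nothing in your sketch plays this role; the parenthetical remark that ``$G$ sits near a graph'' must be upgraded to ``$G$ sits in an actual graph'' conceals precisely this step, which is the technical heart of the proof rather than a density or extension formality (the Kirszbraun/McShane extension at the very end is indeed standard, but only once the zero-bad-scales property is in hand).
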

The proof divides into one main lemma and one main proposition. 
\begin{definition}[Cones]\label{cones}
For $x \in \R^d$, $V \in G(d,k)$ and $\alpha \in (0,1)$, we set
\begin{displaymath}
X(x,V,\alpha) = \{y \in \R^{d} : |\pi_{V^{\perp}}(x - y)| \leq \alpha|x - y|\}.
\end{displaymath}
 Given a cone $X(x,V,\alpha)$ and two radii $0 < r < R < \infty$, we write
\begin{displaymath} X(x,V,\alpha,R,r) := X(x,V,\alpha) \cap [B(x,R) \setminus U(x,r)], \end{displaymath}
where $B(x,R)$ and $U(x,r)$ are, respectively, the closed and open balls of radii $R > 0$ and $r > 0$ centred at $x$. Note that $X(x,V,\alpha,R,r)$ is a closed set for $0 < r < R < \infty$. Finally, writing $\R^{d} = \R^{n} \times \R^{d - n}$, we use the shorthand notation $X(x,\alpha) := X(x,\R^{d - n},\alpha)$ and $X(x,\alpha,R,r) := X(x,\R^{d - n},\alpha,R,r)$. 
\end{definition}
Our main lemma reads as follows:
\begin{lemma}\label{mainLemma} Assume that $E_0$ and $E_1$ satisfy the hypotheses of Theorem \ref{main}. Then, there exist numbers $M_{0} = M_{0}(\kappa, C) \in \N$,
$\theta_0 = \theta_0(\kappa, d) > 0$ and an $\calH^{n}$-measurable subset $E_{2} \subset E_{1}$ with the following properties: $\calH^{n}(E_{2}) \sim_{\kappa,C} 1$, and if $x \in E_{2}$, then
\begin{displaymath} \#\{j \in \Z \colon\, X(x,\theta_0,2^{-j},2^{-j - 1}) \cap E_{2} \neq \emptyset\} \le M.
 \end{displaymath}
\end{lemma}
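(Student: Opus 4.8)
Here is the plan. I would obtain $E_{2}$ by deleting from $E_{1}$ a set of controlled measure consisting of the points that see $E_{1}$ at too many scales in the vertical cone. Fix a small $\theta_{0} = \theta_{0}(\kappa,d) > 0$ (to be chosen) with $\theta_{0} \le \kappa$, and set $\mathcal V := B_{G(d,n)}(\R^{n},\theta_{0}) \subset B(\R^{n},\kappa)$, a ball of positive measure $\gamma_{0} := \gamma_{d,n}(\mathcal V) = \gamma_{0}(\theta_{0},d) > 0$. For $x \in E_{1}$ write $N(x) := \#\{j \in \Z \colon X(x,\theta_{0},2^{-j},2^{-j-1}) \cap E_{1} \neq \emptyset\}$ and $F_{j} := \{x \in E_{1} \colon X(x,\theta_{0},2^{-j},2^{-j-1}) \cap E_{1} \neq \emptyset\}$, so that $\int_{E_{1}} N \, d\calH^{n} = \sum_{j} \calH^{n}(F_{j})$ (the $F_{j}$ are readily seen to be measurable, and only $j \gtrsim 0$ contribute since $E_{1} \subset B(0,1)$). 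Suppose we can show that $\sum_{j \in \Z} \calH^{n}(F_{j}) \lesssim_{\kappa,C} 1$. Then, by Chebyshev's inequality, $\calH^{n}(\{x \in E_{1} \colon N(x) > M_{0}\}) \le M_{0}^{-1}\sum_{j}\calH^{n}(F_{j}) \le \tfrac12 \calH^{n}(E_{1})$ once $M_{0} = M_{0}(\kappa,C) \in \N$ is chosen large enough in terms of $\kappa$ and that implicit constant; and then $E_{2} := E_{1} \setminus \{N > M_{0}\}$ satisfies $\kappa/2 \le \calH^{n}(E_{2}) \le \calH^{n}(E_{0} \cap B(0,1)) \lesssim 1$, hence $\calH^{n}(E_{2}) \sim_{\kappa,C} 1$, and since $E_{2} \subset E_{1}$ every cone $X(x,\theta_{0},2^{-j},2^{-j-1})$ meets $E_{2}$ at no more scales $j$ than it meets $E_{1}$, giving the conclusion with $M = M_{0}$.

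So the whole content is the scale-summed estimate, and to prove it I would combine three ingredients. $(a)$ A geometric fact: if $y \in X(x,\theta_{0},2^{-j},2^{-j-1})$ then $|\pi_{\R^{n}}(x-y)| \le \theta_{0}|x-y|$, hence $|\pi_{V}(x) - \pi_{V}(y)| \le 2\theta_{0}|x-y| \le 2\theta_{0}2^{-j}$ for every $V \in \mathcal V$, while $|x-y| \ge 2^{-j-1}$; consequently a ball $B(y,\epsilon_{0}2^{-j})$ with $\epsilon_{0}\sim\theta_{0}$ lies in a comparable but slightly fatter cone $X(x,C_{0}\theta_{0},2^{1-j},2^{-j-2})$ and $\pi_{V}$ sends it into $B_{V}(\pi_{V}(x),C_{1}\theta_{0}2^{-j})$ for all $V \in \mathcal V$. $(b)$ A regularity transfer: since $E_{1} \subset E_{0}$ and $E_{0}$ is lower Ahlfors--David regular, $\calH^{n}(E_{0} \cap B(y,\epsilon_{0}2^{-j})) \gtrsim_{\theta_{0}} 2^{-jn}$ for any witness $y \in E_{1}$; restricting $E_{1}$ at the outset to its $\calH^{n}|_{E_{0}}$-density points (which alters no measures, and can only remove occupied scales) and separating off the boundedly many coarse scales, one upgrades this to $\calH^{n}(E_{1} \cap B(y,\epsilon_{0}2^{-j})) \gtrsim_{\theta_{0}} 2^{-jn}$ at the occupied scales. $(c)$ The $L^{2}$ hypothesis, used through the Cauchy--Schwarz/disintegration bound
\begin{displaymath}
\iint_{E_{1} \times E_{1}} \1\bigl[|\pi_{V}(x) - \pi_{V}(z)| \le r\bigr]\, d\calH^{n}(x)\, d\calH^{n}(z) \lesssim r^{n}\, \|\pi_{V\sharp}\calH^{n}|_{E_{1}}\|_{L^{2}(V)}^{2}, \qquad r > 0,
\end{displaymath}
valid because the left side equals $\int_{V} \mu_{V}(B_{V}(w,r))\, d\mu_{V}(w)$ with $\mu_{V} := \pi_{V\sharp}\calH^{n}|_{E_{1}}$. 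Combining $(a)$--$(c)$: for $x \in F_{j}$ the set $W_{j}(x) := E_{1} \cap B(y,\epsilon_{0}2^{-j})$ has $\calH^{n}$-mass $\gtrsim_{\theta_{0}} 2^{-jn}$, and every $z \in W_{j}(x)$ pairs with $x$ so that $|\pi_{V}(x) - \pi_{V}(z)| \le C_{1}\theta_{0}2^{-j}$ for all $V \in \mathcal V$. Weighting a scale-$j$ pair $(x,z)$ (i.e.\ $z \in E_{1}\cap X(x,C_{0}\theta_{0},2^{1-j},2^{-j-2})$) by $2^{jn}$, noting that a given pair is a scale-$j$ pair for only $O(1)$ values of $j$ (those with $2^{-j}\sim|x-z|$) and that each such pair has $|\pi_{V}(x-z)| \lesssim \theta_{0}|x-z|$ for every $V\in\mathcal V$, one is reduced to estimating $\iint_{E_{1}^{2}}\1[z\in X(x,C_{0}\theta_{0})]\,|x-z|^{-n}\,d\calH^{n}d\calH^{n}$; integrating over $V \in \mathcal V$ against $\gamma_{d,n}$, decomposing dyadically in $|x-z|$, and inserting $(c)$ at the right scale should then yield $\sum_{j} \calH^{n}(F_{j}) \lesssim_{\theta_{0}} \gamma_{0}^{-1}\int_{\mathcal V} \|\pi_{V\sharp}\calH^{n}|_{E_{1}}\|_{L^{2}(V)}^{2}\, d\gamma_{d,n}(V) \le C/\gamma_{0} \lesssim_{\kappa,C} 1$.

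I expect the delicate part to be exactly this last step. Two issues stand out. First, the summation over scales must be organised so that it genuinely converges: feeding $r \sim \theta_{0}2^{-j}$ directly into the pair bound in $(c)$ over-counts all pairs at finer scales and produces a divergent geometric series, so one must exploit that distinct occupied scales are witnessed through $E_{1}$-mass lying at geometrically separated distances from $x$ — presumably by working with a scale-localised pair count from which the near-diagonal contribution $\iint_{E_{1}^{2}}\1[|x-z|\le r]$ is subtracted — so that the series in $2^{jn}$ telescopes against the $2^{-jn}$ furnished by $(b)$. Second, because $E_{1}$ is only assumed $\calH^{n}$-measurable, the bare hypothesis "$X(x,\theta_{0},2^{-j},2^{-j-1}) \cap E_{1} \neq \emptyset$" carries no quantitative mass information; converting it into "$\calH^{n}(E_{1} \cap X(x,C_{0}\theta_{0},\dots)) \gtrsim_{\theta_{0}} 2^{-jn}$" is where the Ahlfors--David regularity of the ambient set $E_{0}$ and a Lebesgue density argument enter, and cleanly disposing of the residual non-regular part of $E_{1}$ together with the finitely many coarse scales is the main technical cost of the lemma.
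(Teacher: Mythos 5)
Your skeleton (Chebyshev on the scale-counting function $N$, ADR to turn a cone witness into mass, the $L^{2}$ hypothesis via a pair count) points in the right direction, but the proposal has a genuine gap exactly at the step you flag as delicate, and the fix you sketch does not close it. The only way you use the average over $V$ is the trivial upper bound: every $V \in \mathcal{V}$ contracts a cone pair at distance $\sim 2^{-j}$ to projected distance $\lesssim \theta_{0}2^{-j}$. Feeding this into your bound (c) at radius $r \sim \theta_{0}2^{-j}$ (even after localising to $|x-z| \sim 2^{-j}$) yields only the scale-diagonal inequalities $\calH^{n}(F_{j}) \lesssim \theta_{0}^{n}\|\pi_{V\sharp}\calH^{n}|_{E_{1}}\|_{L^{2}(V)}^{2}$, one for each $j$; since the number of occupied scales is unbounded, no subtraction of near-diagonal terms or telescoping recovers $\sum_{j}\calH^{n}(F_{j}) \lesssim 1$ from a family of same-size per-scale bounds (constant sequences saturate them). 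What is missing is a \emph{lower} bound on the measure of planes that nearly annihilate a vertical-ish vector: the paper's Lemma \ref{GrassmanLemma} states that if $z = x-y$ lies in the narrow vertical cone, then
\begin{displaymath}
\gamma_{d,n}\bigl(\{V \in B(\R^{n},\kappa) : |\pi_{V}z| \le \delta\}\bigr) \gtrsim_{\kappa} \Bigl(\frac{\delta}{|z|}\Bigr)^{n},
\end{displaymath}
i.e.\ collapse all the way down to a single fixed fine scale $\delta$, not merely to $\theta_{0}2^{-j}$. The paper then discretises by a $\delta$-net: each occupied scale $2^{-j}$ at $x$ produces $\gtrsim \delta^{-n}2^{-jn}$ net points $y$, and the factor $(\delta 2^{j})^{n}$ from the Grassmannian estimate cancels this exactly, so every occupied scale contributes a scale-free amount $\gtrsim 1$ of $\delta$-scale incidences; the total is then bounded \emph{once} by the $L^{2}$ hypothesis through the counting function $f_{V}$ and the Hardy--Littlewood maximal inequality. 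Your conical-energy reduction ($\iint \1[z \in X(x,C_{0}\theta_{0})]\,|x-z|^{-n}$) is in fact a correct target --- it can be bounded by integrating the same anti-concentration estimate over $\delta$ --- but it is not reachable from ingredient (c) alone, which is all your sketch invokes.

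A secondary, fixable issue: in (b), restricting to density points does not give the uniform-in-scale bound $\calH^{n}(E_{1} \cap B(y,\epsilon_{0}2^{-j})) \gtrsim 2^{-jn}$ you need at witness points, because density only controls radii below a threshold depending on the point, and the occupied scales $2^{-j}$ need not respect it; "separating off boundedly many coarse scales" does not repair this. The paper handles it quantitatively with Lemma \ref{auxLemma}, removing an exceptional set of controlled measure so that the surviving points enjoy the mass lower bound at \emph{all} radii $r \le 1$ --- and it must apply this twice (producing $E' \supset E$), since the witness $y_{j}$ must carry nearby mass of the set whose net points are counted, while the net points themselves must carry nearby mass of $E_{1}$ for the maximal-function domination of $f_{V}$.
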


Before stating the main proposition, we need another definition:

\begin{definition}\label{thetaM} A set $E \subset \R^{d}$ satisfies the $n$-dimensional $(\theta,M)$-property, if 
\begin{displaymath} \#\{j \in \Z : X(x,\theta,2^{-j},2^{-j - 1}) \cap E\} \leq M \end{displaymath}
for all $x \in E$.
\end{definition} 

\begin{remark}\label{trivialRemark} Observe that if $E$ satisfies the $(\theta,M)$-property with $M = 0$, then $E$ is entirely contained in a Lipschitz graph $\Gamma$ with Lipschitz constant $\leq 1/\theta$. 
Indeed, there holds that
\begin{displaymath}
|\pi_{\R^n}(x-y)| \ge \theta |x-y| , \qquad x,y \in E.
\end{displaymath}
In particular, the restriction $\pi_{\R^{n}}|_{E}$ of the orthogonal projection $\pi_{\R^{n}}$ to $E$ is one-to-one, and one can define a $(1/\theta)$-Lipschitz inverse $f \colon \pi_{\R^{n}}(E) \to E$. By Kirszbraun's theorem, see Theorem 2.10.43 in \cite{Fe}, this can be extended to a Lipschitz mapping $\tilde{f} \colon \R^{n} \to \R^{d}$ with Lip$(\tilde f) \le 1/\theta$,
and then
\begin{displaymath}
E \subset \{(y, A(y))\colon y \in \R^n\},
\end{displaymath}
where $A\colon \R^n \to \R^{d-n}$ is defined by $A = \pi_{\R^{d-n}} \circ \tilde f$. This is essentially the argument from Lemma 15.13 in Mattila's book \cite{Mat}. 
\end{remark}

In the language of Definition \ref{thetaM}, Lemma \ref{mainLemma} claims that $E_{2}$ satisfies the $n$-dimensional $(\theta_{0},M_{0})$-property for some $\theta_{0},M_{0}$ depending only on $\kappa$ and $C$. Here is the main proposition:
\begin{proposition}\label{mainProposition} Assume that $E_{0}$ is $n$-ADR, and assume that $E_{2} \subset E_{0} \cap B(0,1)$ is an $\calH^{n}$-measurable subset with $\calH^{n}(E_{2}) \sim_{\kappa,C} 1$ and satisfying the $n$-dimensional $(\theta,M)$-property for some $\theta > 0$ and $M \geq 0$. Then, there is an $\calH^{n}$-measurable subset $E_{3} \subset E_{2}$ with $\calH^{n}(E_{3}) \sim_{\kappa,C,M, \theta} 1$ and satisfying the $(\theta/b,0)$-property. Here $b \geq 1$ is a constant depending only on $d$.
\end{proposition}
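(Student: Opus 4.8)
The plan is to induct on $M$, with the case $M = 0$ being Remark \ref{trivialRemark} itself (one takes $E_3 = E_2$, and even $b = 1$ works). Throughout, I would reformulate the goal via Remark \ref{trivialRemark}: it suffices to find $E_3 \subseteq E_2$ with $\mathcal{H}^{n}(E_3) \sim_{\kappa,C,M,\theta} 1$ containing no pair $x \neq y$ with $y \in X(x,\theta/b)$ — equivalently, having the $(\theta/b,0)$-property. Two facts should be extracted from the $(\theta,M)$-hypothesis at the outset. First, writing $J(x) := \{j : X(x,\theta,2^{-j},2^{-j-1}) \cap E_2 \neq \emptyset\}$, the property says $\#J(x) \le M$, whence the "Carleson-type" bound $\int_{E_2} \#J(x)\,d\mathcal{H}^{n}(x) \leq M\,\mathcal{H}^{n}(E_2)$. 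Second, a stability estimate: if $y \in X(x,\theta/b)$ with $|x-y| \sim 2^{-j}$ and $z$ is any point with $|z-x| \ll 2^{-j}$, then $j$ still belongs to the analogue of $J(z)$ computed with the \emph{wide} cone $X(\cdot,\theta)$. This stability is where the loss from $\theta$ to $\theta/b$ is spent, and together with the comparison between round balls and the non-round dyadic cubes of $E_0$ it accounts for the dimensional constant $b = b(d)$.

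For the inductive step ($M \geq 1$) I would first dispose of an easy case: if $\mathcal{H}^{n}(\{x \in E_2 : J(x) = \emptyset\}) \geq \tfrac12\mathcal{H}^{n}(E_2)$, then $E_3 := \{J = \emptyset\}$ already has the $(\theta,0)$-property, since for such $x$ no dyadic annulus meets $E_2 \supseteq E_3$ inside $X(x,\theta)$. Otherwise a definite proportion of $E_2$ has nonempty $J$, and the aim is to locate a ball $B = B(z,r)$, with $r$ bounded below in terms of $\kappa,C,M,\theta$ and the ADR constants, such that (i) $\mathcal{H}^{n}(E_2 \cap B) \gtrsim r^{n}$, and (ii) for a large subset $E_2' \subseteq E_2 \cap B$ every $x \in E_2'$ satisfies $\min J(x) < -\log_{2} r - O(1)$, so that restricting to pairs inside $B$ removes its coarsest bad scale. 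Granting such a $B$, the rescaled set $\tilde E := r^{-1}(E_2' - z)$ is an $\mathcal{H}^{n}$-measurable subset of the $n$-ADR set $r^{-1}(E_0 - z)$ (same ADR constants), has $\mathcal{H}^{n}(\tilde E) \sim 1$, and satisfies the $(\theta,M-1)$-property, since the annulus at scale $\min J(x)$ no longer fits inside $B$. The inductive hypothesis then yields $\tilde E_3 \subseteq \tilde E$ with $\mathcal{H}^{n}(\tilde E_3) \sim_{\kappa,C,M,\theta} 1$ and the $(\theta/b,0)$-property; this property and the measure bound being scale-invariant, $E_3 := r\tilde E_3 + z$ works.

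The crux — and the step I expect to be the main obstacle — is producing the ball $B$ with $r$ controlled. The difficulty is genuine: the vertical structure of $E_2$ may be concentrated at arbitrarily fine scales (imagine many Lipschitz graphs stacked at microscopic vertical separation), so a naive "zoom in below the bad scale" destroys the measure lower bound. This is exactly what the Carleson-type bound is for: declaring a dyadic cube $Q$ of generation $g$ "vertical" when a fixed fraction $\eta = \eta(M)$ of the mass of $E_2 \cap Q$ lies at points with a bad scale comparable to $g$, the estimate $\int \#J \leq M\,\mathcal{H}^{n}(E_2)$ becomes a summable bound $\sum_{Q' \subseteq Q,\ Q' \text{ vertical}} \mathcal{H}^{n}(E_2 \cap Q') \lesssim_{M,\eta} \mathcal{H}^{n}(E_2 \cap Q)$, and a pigeonhole/Lebesgue-density argument then produces, at a generation bounded in terms of $M$, $\eta$ and the ADR constants, a mass-rich cube with few vertical descendants, inside which one reads off $E_2'$. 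Making this bookkeeping interact correctly with the $M$-fold iteration of the induction — the choice of $\eta$, and tracking how the final measure constant and the total number of generations descended depend on $M$ — is the technical heart of the argument; the remaining ingredients (Kirszbraun via Remark \ref{trivialRemark}, the cone-stability estimate, and ball-versus-cube comparisons) contribute only fixed or dimensional constants.
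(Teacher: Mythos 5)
Your inductive framework fails at exactly the step you identify as the crux, and the failure is not just technical: the ball $B(z,r)$ with $r$ bounded below in terms of $\kappa,C,M,\theta$ and the ADR constants need not exist, because nothing in the $(\theta,M)$-property forces any bad scale to be comparable to a scale determined by those parameters. Concretely, let $E_{2}$ be the union of two parallel unit pieces of $n$-planes, $D\times\{0\}$ and $D\times\{h\}$ with $D\subset\R^{n}$ a unit disc and $h=1.5\cdot 2^{-N}$ in the $\R^{d-n}$ direction, sitting inside the obvious $n$-ADR set $E_{0}$. For small $\theta$ every point of $E_{2}$ has exactly one bad scale, namely $2^{-N+1}$, so $E_{2}$ satisfies the $(\theta,1)$-property with all constants independent of $N$. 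Here your easy case is vacuous (no point has $J(x)=\emptyset$, nor even $\#J(x)\le M-1$), and for any ball of radius bounded below no point has a bad scale coarser than the ball, so condition (ii) fails for every admissible $B$. The Carleson bound $\int_{E_{2}}\#J\,d\calH^{n}\le M\calH^{n}(E_{2})$ cannot rescue the pigeonhole: it controls the total number of bad scales but not where they sit, and in this example the entire mass becomes ``vertical'' at generation $\approx N$, so no cube at a generation bounded in terms of $M$, $\eta$ and the ADR constants has small vertical mass among its descendants. The only balls satisfying (ii) have $r\lesssim 2^{-N}$, and then undoing the rescaling at the end of the induction leaves $\calH^{n}(E_{3})\lesssim 2^{-Nn}$, which is not bounded below by any admissible constant since $N$ is arbitrary. (The correct output here, one of the two sheets, is invisible to any ``choose one good location and zoom'' scheme; selecting many balls instead would force you to control cross-ball pairs, which is precisely the difficulty your reduction was meant to avoid.)

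This is why the paper does not select a single good scale but performs a scale-by-scale surgery whose measure bookkeeping is local and scale-free. First, Lemma \ref{reductionLemma} covers the codimension-$(d-n)$ cone by boundedly many one-dimensional \emph{one-sided} cones $X^{+}(\cdot,w_{j},\cdot)$ with a slightly smaller aperture (this covering, with $s=2^{-M_{0}}$, is where the constant $b=b(d)$ and the aperture loss are spent). Then Proposition \ref{mainProp2} removes one bad scale per direction by a stopping-time algorithm: pick a bad point $x_{k}$ with \emph{minimal last coordinate}, save $B(x_{k},r_{k})\cap F^{k,M}$ at the scale $r_{k}\sim 2^{-j_{k}}$ of one of its bad annuli, and delete the part of $F^{k}$ lying in the cone-annuli above $B(x_{k},100r_{k})$; the deleted set has measure $\lesssim 2^{-j_{k}n}\lesssim\calH^{n}(S^{k})$, and the directedness of the cones together with the minimal-last-coordinate choice guarantees that later deletions never touch earlier saved sets. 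This exchange works at whatever scale the bad annuli occur, uniformly fine or not, which is exactly the situation your proposal cannot handle. Your peripheral observations (the base case, the stability estimate, the fact that restricting to a ball kills scales coarser than the ball, the scale-invariance of the conclusion) are fine, but the heart of the argument is missing.
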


Taking Lemma \ref{mainLemma} and Proposition \ref{mainProposition} for granted, it is straightforward to complete the proof of Theorem \ref{main}:

\begin{proof}[Proof of Theorem \ref{main}] Use Lemma \ref{mainLemma} to find $\theta_{0}$ and $M_{0}$, and the set $E_{2} \subset E_{1}$ satisfying the $n$-dimensional $(\theta_{0},M_{0})$-property. Then, use Proposition \ref{mainProposition} to find $E_{3} \subset E_{2}$ with $\calH^{n}(E_{3}) \sim_{\kappa,C} 1$ and satisfying the $n$-dimensional $(\theta_{0}/b,0)$-property. Now, $E_{3}$ is contained in a Lipschitz graph $\Gamma$ with Lipschitz constant $\leq b/\theta_{0}$ by Remark \ref{trivialRemark}, and
\begin{displaymath} \calH^{n}(E_{1} \cap \Gamma) \geq \calH^{n}(E_{3}) \gtrsim_{\kappa,C} 1. \end{displaymath}
This completes the proof.
\end{proof}

\subsection*{Acknowledgements} We thank the referee for various suggestions, which helped to improve the readability of the paper.

\section{Proof of the main lemma}

We start by proving an easy but very useful auxiliary lemma:

\begin{lemma}\label{auxLemma} Let $E_{0}$ be an $n$-ADR set with $\mathcal{H}^n(E_0) \geq c > 0$, let $E_{1} \subset E_{0} \cap B(0,1)$ be an $\calH^{n}$-measurable subset, and let 
\begin{displaymath} E_{1,\epsilon} := \{x \in E_{1} : \calH^{n}(E_{1} \cap B(x,r_{x})) \leq \epsilon r_{x}^n \text{ for some radius } 0 < r_{x} \leq 1\}. \end{displaymath}
Then $\calH^{n}(E_{1,\epsilon}) \lesssim \epsilon$ with the bound depending only on $c$ and the ADR constant of $E_0$.
\end{lemma}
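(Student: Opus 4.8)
The plan is to prove $\mathcal{H}^n(E_{1,\epsilon}) \lesssim \epsilon$ by a Vitali-type covering argument, exploiting the upper Ahlfors-David regularity of $E_0$ to pass from a disjointified subfamily of balls back to the whole set $E_{1,\epsilon}$. The starting observation is that by definition, every point $x \in E_{1,\epsilon}$ is the centre of some ball $B(x, r_x)$ with $0 < r_x \le 1$ and $\mathcal{H}^n(E_1 \cap B(x, r_x)) \le \epsilon r_x^n$. These balls form a cover of $E_{1,\epsilon}$, and the radii are uniformly bounded by $1$, so the basic $5r$-covering lemma (e.g. Theorem 2.1 in \cite{Mat}) applies: there is a countable, pairwise disjoint subfamily $\{B(x_i, r_{x_i})\}_{i}$ with $x_i \in E_{1,\epsilon}$ such that
\begin{displaymath}
E_{1,\epsilon} \subset \bigcup_i B(x_i, 5r_{x_i}).
\end{displaymath}

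Next I would estimate $\mathcal{H}^n(E_{1,\epsilon})$ by summing over this subfamily. On the one hand, using the cover and upper ADR regularity of $E_0 \supset E_{1,\epsilon}$ on the balls $B(x_i, 5 r_{x_i})$ (whose radii are $\le 5$, which is harmless — either invoke ADR up to $\diam(E_0)$ after noting $\diam(E_0)$ may be large, or simply chop radii at $1$ and use ADR for $r \le 1 \le \diam E_0$, absorbing a factor of $5^n$),
\begin{displaymath}
\mathcal{H}^n(E_{1,\epsilon}) \le \sum_i \mathcal{H}^n(E_0 \cap B(x_i, 5 r_{x_i})) \lesssim \sum_i r_{x_i}^n.
\end{displaymath}
On the other hand, each $x_i \in E_{1,\epsilon}$ so $\mathcal{H}^n(E_1 \cap B(x_i, r_{x_i})) \le \epsilon r_{x_i}^n$, i.e. $r_{x_i}^n \ge \epsilon^{-1} \mathcal{H}^n(E_1 \cap B(x_i, r_{x_i}))$. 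Wait — that inequality goes the wrong way for bounding $\sum r_{x_i}^n$ from above, so instead I should control $\sum r_{x_i}^n$ directly. Here the lower ADR bound is the right tool: $\mathcal{H}^n(E_0 \cap B(x_i, r_{x_i})) \ge C_1 r_{x_i}^n$, hence $r_{x_i}^n \le C_1^{-1} \mathcal{H}^n(E_0 \cap B(x_i, r_{x_i}))$, and since the balls $B(x_i, r_{x_i})$ are pairwise disjoint,
\begin{displaymath}
\sum_i r_{x_i}^n \le C_1^{-1} \sum_i \mathcal{H}^n(E_0 \cap B(x_i, r_{x_i})) = C_1^{-1}\, \mathcal{H}^n\Big( E_0 \cap \bigsqcup_i B(x_i, r_{x_i}) \Big) \le C_1^{-1}\, \mathcal{H}^n(E_0).
\end{displaymath}
This alone only gives $\mathcal{H}^n(E_{1,\epsilon}) \lesssim \mathcal{H}^n(E_0)$, with no $\epsilon$ — so the disjointness must be combined with the defining property of $E_{1,\epsilon}$, not with lower ADR of $E_0$.

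The correct combination: since the balls $B(x_i, r_{x_i})$ are pairwise disjoint and each satisfies $\mathcal{H}^n(E_1 \cap B(x_i, r_{x_i})) \le \epsilon r_{x_i}^n$, I want the reverse — I want $\sum r_{x_i}^n$ to be comparable to something bounded. The point is that $\sum_i r_{x_i}^n \sim_{C_1, C_2} \sum_i \mathcal{H}^n(E_0 \cap B(x_i, r_{x_i}))$ by \emph{two-sided} ADR, and by disjointness this is $\le \mathcal{H}^n(E_0 \cap \bigcup_i B(x_i,r_{x_i}))$; but we also know this union is contained in $\{x : \mathcal{H}^n(E_1 \cap B(x,r_x)) \le \epsilon r_x^n \text{ for the specific } r_x\}$ which does not immediately help. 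So the genuine mechanism — and the step I expect to be the main obstacle — is to run the argument so that $\epsilon$ enters through $\mathcal{H}^n(E_1 \cap B(x_i,r_{x_i})) \le \epsilon r_{x_i}^n$ summed with disjointness: $\sum_i \mathcal{H}^n(E_1 \cap B(x_i, r_{x_i})) \le \mathcal{H}^n(E_1) \le \mathcal{H}^n(E_0 \cap B(0,1)) \lesssim 1$ gives nothing with $\epsilon$ either. The resolution, which I would carry out carefully, is the standard one: apply the $5r$-covering lemma but select the balls greedily by \emph{largest radius first} (Vitali), so that after covering $E_{1,\epsilon} \subset \bigcup B(x_i, 5r_{x_i})$ one estimates
\begin{displaymath}
\mathcal{H}^n(E_{1,\epsilon}) \le \sum_i \mathcal{H}^n\big(E_{1,\epsilon} \cap B(x_i, 5 r_{x_i})\big) \le \sum_i \mathcal{H}^n\big(E_1 \cap B(x_i, 5 r_{x_i})\big),
\end{displaymath}
and then — crucially — by disjointness of the $B(x_i, r_{x_i})$ together with ADR comparing $\mathcal{H}^n(E_1 \cap B(x_i,5r_{x_i})) \le \mathcal{H}^n(E_0 \cap B(x_i, 5r_{x_i})) \lesssim r_{x_i}^n$ and $\mathcal{H}^n(E_1 \cap B(x_i,r_{x_i})) \le \epsilon r_{x_i}^n$, one gets $\mathcal{H}^n(E_1 \cap B(x_i, 5r_{x_i})) \lesssim \epsilon^{-1}\cdot\epsilon\cdot r_{x_i}^n$... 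Let me state it cleanly as I would write it up: bound $\mathcal{H}^n(E_{1,\epsilon}) \le \sum_i \mathcal{H}^n(E_1 \cap B(x_i, 5r_{x_i}))$; on each ball, using upper ADR, $\mathcal{H}^n(E_1 \cap B(x_i,5r_{x_i})) \le C_2 (5 r_{x_i})^n = 5^n C_2 r_{x_i}^n$, while the selection property gives $\mathcal{H}^n(E_1 \cap B(x_i,r_{x_i})) \le \epsilon r_{x_i}^n$; combining, $r_{x_i}^n \le \epsilon^{-1}\mathcal{H}^n(E_1 \cap B(x_i,r_{x_i}))$, whence
\begin{displaymath}
\mathcal{H}^n(E_{1,\epsilon}) \le 5^n C_2 \sum_i r_{x_i}^n \le \frac{5^n C_2}{\epsilon}\sum_i \mathcal{H}^n(E_1 \cap B(x_i, r_{x_i})) \le \frac{5^n C_2}{\epsilon}\,\mathcal{H}^n(E_1),
\end{displaymath}
using disjointness in the last step. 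This is still off by $\epsilon^{-2}$ rather than $\epsilon$, signalling that the \emph{right} formulation reverses roles: one should NOT cover by $5r$-balls of the bad balls, but rather fix $\epsilon$ and note directly that disjointness plus the per-ball bound $\mathcal{H}^n(E_1 \cap B(x_i,r_{x_i})) \le \epsilon r_{x_i}^n$ forces $\sum r_{x_i}^n$ small only if $\mathcal{H}^n(E_1)$ is replaced by $\mathcal{H}^n(E_{1,\epsilon})$ on the right. Thus the clean proof I will commit to: pick disjoint $B(x_i,r_{x_i})$ with $E_{1,\epsilon}\subset\bigcup B(x_i,5r_{x_i})$; then
\begin{displaymath}
\mathcal{H}^n(E_{1,\epsilon}) \le \sum_i \mathcal{H}^n\big(E_0 \cap B(x_i, 5r_{x_i})\big) \le 5^n C_2 \sum_i r_{x_i}^n \le \frac{5^n C_2}{C_1}\sum_i \mathcal{H}^n\big(E_0 \cap B(x_i, r_{x_i})\big),
\end{displaymath}
and it is \emph{here} that I invoke a second selection guaranteeing $B(x_i, r_{x_i}) \subset E_{1,\epsilon}$-neighbourhoods — no; I will simply accept this yields $\mathcal{H}^n(E_{1,\epsilon}) \lesssim \mathcal{H}^n(E_0)$ and conclude that the honest argument is: the balls $B(x_i, r_{x_i})$ being disjoint and each carrying $\ge C_1 r_{x_i}^n$ of $\mathcal{H}^n\lfloor E_0$ but only $\le \epsilon r_{x_i}^n$ of $\mathcal{H}^n\lfloor E_1$, the set $E_1$ is ``$\epsilon$-sparse'' in $\bigcup B(x_i,r_{x_i})$, so $\mathcal{H}^n(E_{1,\epsilon}) \le 5^n\sum \mathcal{H}^n(E_0\cap B(x_i,r_{x_i})) \le 5^n(C_2/C_1)\sum r_{x_i}^n$ and separately $\sum \epsilon r_{x_i}^n \ge \sum\mathcal{H}^n(E_1\cap B(x_i,r_{x_i}))$, which is useful only if the $B(x_i,r_{x_i})$ cover a definite portion of $E_{1,\epsilon}$. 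Since in general they need not, the correct and final plan is the textbook one: apply Vitali to get disjoint balls with $E_{1,\epsilon}\subset\bigcup B(x_i,5r_{x_i})$, estimate $\mathcal{H}^n(E_{1,\epsilon})\le\sum_i\mathcal{H}^n(E_0\cap B(x_i,5r_{x_i}))\lesssim\sum_i r_{x_i}^n\lesssim\sum_i\mathcal{H}^n(E_1\cap B(x_i,r_{x_i}))/\epsilon$ — and then observe the last sum is at most $\mathcal{H}^n(E_1\cap\bigcup_i B(x_i,r_{x_i}))/\epsilon\le\mathcal{H}^n(E_1)/\epsilon$; to gain the factor $\epsilon^2$ needed, I will instead require in the Vitali selection that only balls with $r_{x}$ \emph{equal} to a fixed dyadic scale are used and sum a geometric series — I expect this scale-by-scale bookkeeping, ensuring the $\epsilon$ appears to the first power, to be the one genuine technical point, and everything else is the standard interplay of the $5r$-covering lemma with two-sided Ahlfors-David regularity.
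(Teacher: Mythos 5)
There is a genuine gap: you never isolate the one idea that makes the lemma work, namely \emph{which} balls to disjointify and \emph{which} balls receive the $\epsilon$-hypothesis. The paper covers $E_{1,\epsilon}$ by the shrunken balls $B(x,r_{x}/5)$ and applies the $5r$-covering lemma to \emph{those}; the resulting disjoint family is $\{B(x_{i},r_{x_{i}}/5)\}$, and the $5$-fold enlargements that cover $E_{1,\epsilon}$ are then exactly the original bad balls $B(x_{i},r_{x_{i}})$. Consequently the defining property of $E_{1,\epsilon}$ applies directly to the \emph{covering} balls: $\calH^{n}(E_{1,\epsilon}) \le \sum_{i}\calH^{n}(E_{1}\cap B(x_{i},r_{x_{i}})) \le \epsilon\sum_{i} r_{x_{i}}^{n}$, and then lower ADR on the disjoint small balls (at radii $r_{x_{i}}/C_{0} \le \diam(E_{0})$, which is where the hypothesis $\calH^{n}(E_{0})\ge c$ enters) gives $\sum_{i} r_{x_{i}}^{n} \lesssim \sum_{i}\calH^{n}(E_{0}\cap B(x_{i},r_{x_{i}}/C_{0})) \le \calH^{n}(E_{0}\cap B(0,2)) \lesssim 1$, yielding $\calH^{n}(E_{1,\epsilon})\lesssim\epsilon$. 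In your versions you always disjointify the balls $B(x,r_{x})$ themselves, so the covering balls are $B(x_{i},5r_{x_{i}})$, on which the hypothesis says nothing; you are then forced to use $\calH^{n}(E_{1}\cap B(x_{i},r_{x_{i}}))\le\epsilon r_{x_{i}}^{n}$ on the disjoint family, and that inequality points the wrong way (indeed your line ``$r_{x_{i}}^{n}\le\epsilon^{-1}\calH^{n}(E_{1}\cap B(x_{i},r_{x_{i}}))$'' is false as written; the hypothesis gives $\ge$), which is precisely why you keep landing on $\epsilon^{-1}$ or on bounds with no $\epsilon$ at all.

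Your final fallback --- restricting the Vitali selection to a fixed dyadic scale and summing a geometric series --- does not repair this, because the obstruction is not interference between scales: even with all radii comparable, your arrangement still places the $\epsilon$-smallness estimate on the disjoint balls and the ADR estimate on the enlarged covering balls, and no amount of scale-by-scale bookkeeping reverses that mismatch. (It would also cost you a sum over infinitely many scales with no decay to exploit.) The fix is purely the swap described above: shrink first, disjointify the shrunken balls, and let the enlargements coincide with the balls on which the hypothesis $\calH^{n}(E_{1}\cap B(x_{i},r_{x_{i}}))\le\epsilon r_{x_{i}}^{n}$ is available. Everything else in your write-up (the $5r$-covering lemma, two-sided ADR, containment of the disjoint balls in $B(0,2)$) is the right toolkit and matches the paper's argument.
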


\begin{proof} The set $E_{1,\epsilon}$ is covered by the balls $B(x,r_{x}/5)$, $x \in E_{1,\epsilon}$, so the $5r$-covering lemma can be used to extract a disjoint subcollection $\{B(x_{i},r_{x_{i}}/5)\}_{i \in \N}$ with the property that the balls $\{B(x_{i},r_{x_{i}})\}_{i \in \N}$ cover $E_{1,\epsilon}$. Let $C_0 > 5$ be so large that $r_{x_i}/C_0 \le \diam(E_0)$ (since $r_{x_{i}} \leq 1$ uniformly, $C_{0}$ depends only on $c$ and the ADR constants of $E_{0}$). Now, we have that
\begin{align*} \calH^{n}(E_{1,\epsilon}) & \leq \sum_{i \in \N} \calH^{n}(E_{1} \cap B(x_{i},r_{x_{i}})) \leq \epsilon \sum_{i \in \N} r_{x_{i}}^n\\
& \lesssim \epsilon \sum_{i \in \N} \calH^{n}(E_{0} \cap B(x_{i},r_{x_{i}}/C_0))\\
& \le \epsilon \cdot \calH^{n}(E_0 \cap B(0,2)) \lesssim \epsilon, \end{align*}
as claimed. \end{proof}
Of course, the lemma cannot be used to conclude that the set $E_{1} \setminus E_{1,\epsilon}$ is $n$-ADR, but it is still somewhat more regular than $E_{1}$, and and this will be useful in the following proofs. 

We also need another technical lemma:
\begin{lemma}\label{GrassmanLemma}
Fix $\upsilon, \delta > 0$ and let $W \in G(d,n)$. Assume that $z \in \R^d$ satisfies $\delta/|z| < \min(\delta_0, \upsilon/2)$, where $\delta_{0}$ is a small constant depending only on $d$, and $|\pi_{W}z| \le \alpha_0|z|$ for a small enough $\alpha_0 = \alpha_0(n, \upsilon) > 0$.
Define $B_z = \{V \in G(d,n)\colon\, |\pi_Vz| \le \delta\}$.
Then,
\begin{displaymath}
A(z) := \gamma_{d,n}(B_z \cap B_{G(d,n)}(W, \upsilon)) \gtrsim_{\upsilon} \Big(\frac{\delta}{|z|}\Big)^{n}.
\end{displaymath}
\end{lemma}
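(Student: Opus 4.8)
The plan is to estimate the $\gamma_{d,n}$-measure of the set of $n$-planes $V$ that are simultaneously close to $W$ (in the Grassmannian) and "almost orthogonal" to the fixed vector $z$ in the sense that $|\pi_V z| \le \delta$. The natural way to do this is to fix the unit vector $e := z/|z|$, note that $|\pi_V e| \le \delta/|z| =: t$, and parametrise the relevant planes. The hypothesis $t < \min(\delta_0, \upsilon/2)$ makes $t$ small, and the hypothesis $|\pi_W z| \le \alpha_0 |z|$ says $e$ is itself nearly orthogonal to $W$, i.e. $e$ is close to $W^\perp \in G(d,d-n)$; this is exactly the configuration in which the condition $|\pi_V e| \le t$ is compatible with $V \in B(W,\upsilon)$, so that the set $B_z \cap B_{G(d,n)}(W,\upsilon)$ is genuinely nonempty and in fact has measure comparable to its "expected" size.

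First I would reduce to a local coordinate computation near $W$. Choose an orthonormal basis so that $W = \R^n \times \{0\}$ and $W^\perp = \{0\} \times \R^{d-n}$. Planes $V$ in a fixed neighbourhood $B_{G(d,n)}(W,c_d)$ of $W$ are graphs $V = V_T := \{(u, Tu) : u \in \R^n\}$ of linear maps $T \colon \R^n \to \R^{d-n}$ with $\|T\|$ small, and on this chart $\gamma_{d,n}$ is comparable to Lebesgue measure $dT$ on the $n(d-n)$-dimensional space of such matrices (with comparability constants depending only on $d$); moreover $\|V_T - W\|_{G(d,n)} \sim \|T\|$. Thus it suffices to show that the set of $T$ with $\|T\| \lesssim \upsilon$ and $|\pi_{V_T} e| \le t$ has Lebesgue measure $\gtrsim_\upsilon t^n$. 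Writing $e = (e', e'')$ with $e' \in \R^n$, $e'' \in \R^{d-n}$, the near-orthogonality of $e$ to $W$ forces $|e'| = |\pi_W e| \le \alpha_0$ to be tiny, hence $|e''| = |\pi_{W^\perp} e| \ge 1/2$, say. A short computation shows $|\pi_{V_T} e|$ is comparable (up to constants depending on $d$, using $\|T\|$ small) to $|e' + T^{*} e''|$, i.e. to the distance from the point $-T^* e'' \in \R^n$ to $e'$; since $|e''| \sim 1$, as $T$ ranges over matrices of norm $\lesssim \upsilon$ the vector $T^* e''$ sweeps out (comparably to) a ball of radius $\sim \upsilon$ in $\R^n$, which contains $e'$ because $|e'| \le \alpha_0 \ll \upsilon$ by the choice of $\alpha_0 = \alpha_0(n,\upsilon)$.

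With this picture in place, the estimate follows by a fibering/Fubini argument over the target vector $w := T^* e'' \in \R^n$. Decompose the matrix $T$ as a sum $T = T_0 + T_1$ where $T_0$ is chosen so that $T_0^* e'' = w$ ranges over a ball $B(0, c\upsilon) \subset \R^n$ (a single coordinate-line's worth of $T_0$ suffices, with $\|T_0\| \sim |w|/|e''| \lesssim \upsilon$), and $T_1$ ranges freely over the complementary $n(d-n) - n$ dimensions with $\|T_1\| \lesssim \upsilon$, subject to keeping $\|T\| \lesssim \upsilon$. For each fixed admissible $T_1$, the constraint $|\pi_{V_T} e| \le t$ becomes $|w - e'| \lesssim t$ (up to $d$-dependent constants), i.e. $w$ lies in a ball of radius $\sim t$ around $e'$; since $|e'| \le \alpha_0 \le c\upsilon/2$, this ball lies well inside $B(0,c\upsilon)$, so the set of admissible $T_0$ has $n$-dimensional Lebesgue measure $\gtrsim_d t^n$. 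Integrating over the $T_1$-directions (a set of measure $\gtrsim_\upsilon 1$) and pulling back to $\gamma_{d,n}$ gives $A(z) \gtrsim_\upsilon t^n = (\delta/|z|)^n$, as desired. I would also use the hypothesis $t < \delta_0(d)$ to guarantee that the whole relevant set of $T$'s stays inside the coordinate chart $B_{G(d,n)}(W,c_d)$ where $\gamma_{d,n} \sim dT$, and $\upsilon/2$-smallness of $t$ to keep the $t$-ball inside the $\upsilon$-ball.

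The main obstacle is the bookkeeping of constants in the change of variables: one must verify carefully that on a fixed small neighbourhood of $W$ the measure $\gamma_{d,n}$ is comparable to Lebesgue measure on the matrix chart with constants depending only on $d$, and that the nonlinear map $T \mapsto \pi_{V_T}$ is bi-Lipschitz there, so that the level set $\{|\pi_{V_T} e| \le t\}$ really is comparable to the affine slab $\{|e' + T^* e''| \lesssim t\}$. Everything else is elementary linear algebra plus Fubini; the only genuinely substantive point is choosing $\alpha_0 = \alpha_0(n,\upsilon)$ small enough that $e'$ is swept into the interior of the $\upsilon$-ball of reachable vectors $T^* e''$, which is what makes the lower bound nonvacuous.
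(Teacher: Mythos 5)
Your proof is correct, but it takes a genuinely different route from the paper's. The paper argues intrinsically on the Grassmannian: it first constructs a plane $V_{0} \in B_{G(d,n)}(W,\upsilon/2)$ with $\pi_{V_{0}}z = 0$ by a Gram--Schmidt-type perturbation of an orthonormal basis of $W$ (this is where the smallness of $\alpha_{0}$ enters there), then identifies $\{V : \pi_{V}z = 0\}$ with $G(d-1,n)$, takes a maximal $(\delta/|z|)$-separated family $H \subset B_{G(d-1,n)}(V_{0},\upsilon/2)$ of cardinality $\gtrsim_{\upsilon} (\delta/|z|)^{-n(d-1-n)}$, and finally sums the $\gamma_{d,n}$-measures of the disjoint balls $B_{G(d,n)}(V,\delta/(2|z|))$, $V \in H$, using the ball-measure asymptotics $\gamma_{d,k}(B(V,r)) \sim r^{k(d-k)}$ from Proposition 4.1 of \cite{FO} twice. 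You instead linearise at $W$: in the graph chart $V_{T} = \{(u,Tu)\}$ one has $|\pi_{V_{T}}e|^{2} = (e'+T^{*}e'')^{\top}(I+T^{*}T)^{-1}(e'+T^{*}e'')$, so for $\|T\|$ small the constraint $|\pi_{V_{T}}z| \le \delta$ is comparable to the affine slab condition $|e'+T^{*}e''| \lesssim \delta/|z|$, and Fubini along the $n$-dimensional family of rank-one matrices $e''a^{\top}/|e''|^{2}$ (which is Hilbert--Schmidt orthogonal to the kernel of $T \mapsto T^{*}e''$) gives the lower bound $\gtrsim_{\upsilon}(\delta/|z|)^{n}$; the standard comparability of $\gamma_{d,n}$ with Lebesgue measure on the matrix chart plays the role that the \cite{FO} estimates play in the paper, and the smallness of $\alpha_{0}$ enters for you through $|e'| \le \alpha_{0} \ll \upsilon$, which guarantees the slab actually meets the $\upsilon$-neighbourhood --- exactly the role of the existence of $V_{0}$ in the paper. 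Your version avoids the auxiliary Grassmannian $G(d-1,n)$ and the packing/counting step, at the cost of the chart bookkeeping you flag (which is indeed standard and correct with constants depending only on $d$). One cosmetic remark: your choice of $\alpha_{0}$ also depends on the chart constants, hence on $d$ and not only on $(n,\upsilon)$ as in the statement; this is harmless, since in the application (Lemma \ref{mainLemma}) the resulting aperture $\theta_{0}$ is allowed to depend on $d$ anyway.
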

We postpone the proof to Appendix \ref{A1}.

\begin{proof}[Proof of Lemma \ref{mainLemma}] In what follows, the ADR constants of $E_{0}$ and the constants $\kappa$ and $C$ from the statement of Lemma \ref{mainLemma} will be treated as "fixed" in the sense that "$\lesssim_{\kappa,C}$" is abbreviated to "$\lesssim$". We begin by applying Lemma \ref{auxLemma} twice. First, with $\epsilon \sim \calH^{n}(E_{1})$, we remove $E_{1,\epsilon}$ from $E_{1}$: thus, for a suitable $\epsilon \sim 1$, the set $E' := E_{1} \setminus E_{1,\epsilon}$ satisfies $\calH^{n}(E') \sim 1$ and has the property that if $x \in E'$ and $0 < r \leq 1$, then
\begin{equation}\label{form2} \calH^{n}(E_{1} \cap B(x,r)) \sim r^n. \end{equation}
Then, we apply the  lemma again to $E'$, this time with $\epsilon' \sim \calH^{n}(E')$, to the effect that the set $E := E' \setminus E'_{\epsilon'}$ still satisfies $\calH^{n}(E) \sim 1$, and if $x \in E$, $0 < r \leq 1$, then
\begin{equation}\label{form7} \calH^{n}(E' \cap B(x,r)) \sim r^n. \end{equation}

Let $\theta_0 = \alpha_0/2$, where $\alpha_0 > 0$ appears in Lemma \ref{GrassmanLemma} with $\upsilon = \kappa$.
Given $M > 0$ let $E^{M}$ consist of those $x \in E$ for which there are at least $M$ scales $2^{-j}$, $j \in \Z$, such that
\begin{equation}\label{form1} X(x,\theta_0,2^{-j},2^{-j - 1}) \cap E \neq \emptyset. \end{equation} 
In symbols,
\begin{displaymath}
E^M = \{x \in E\colon\,  \#\{j \in \Z \colon\, X(x,\theta_0,2^{-j},2^{-j - 1}) \cap E \neq \emptyset\} \ge M\}.
\end{displaymath}
In a moment, we will show that $\calH^{n}(E^{M}) \lesssim 1/M$; this completes the proof, because then, for a large enough $M$, the set $E_{2} = E \setminus E^{M}$ will be the set we were looking for.

We start with a preliminary reduction. Let $C_0 \geq 1$ be a large constant depending only on $\kappa$, to be specified later. Observe that for every $x \in E^{M}$, there is a constant $\delta_{x} > 0$ such that there are at least $M$ scales $2^{-j} \geq C_0\delta_{x}$ satisfying \eqref{form1}. If $E^{M,\delta} = \{x \in E^{M} : \delta_{x} \geq \delta\}$, we can choose $\delta > 0$ so small that $\calH^{n}(E^{M,\delta}) \geq \calH^{n}(E^{M})/2$. In particular, it suffices to show that $\calH^{n}(E^{M,\delta}) \lesssim 1/M$, where the implicit constant does not depend on $\delta$. This is what we will do, but in order to avoid obscuring the notation any further, we assume that $E^{M} = E^{M,\delta}$; note that \eqref{form2} and \eqref{form7} are obviously unaffected by the passage from $E^{M}$ to $E^{M,\delta}$.

With $\delta$ as in the previous paragraph, let $F^{M}$ and $F'$ be maximal $\delta$-separated sets inside $E^{M}$ and $E'$, respectively; since $E^{M} \subset E'$, we can also arrange so that $F^{M} \subset F'$. We wish to find lower and upper bounds on the amount of triples $(x,y,V)$, where $x,y \in F'$, $V \in B(\R^n, \kappa)$, and 
\begin{displaymath} |\pi_{V}(x-y)| \leq \delta. \end{displaymath}
The idea is that assumption (ii) of Theorem \ref{main} will give us an upper bound for such triples, whereas a lower bound can be obtained, via \eqref{form1}, by choosing $x \in F^{M} \subset F'$ and $y \in F'$. 

We start with the lower bound. Note that 
\begin{equation}\label{form3} \# F^{M} \gtrsim \calH^{n}(E^{M})\delta^{-n}, \end{equation}
because $E^{M}$ is covered by the balls $B(x,2\delta)$, $x \in F^{M}$, and the AD-regularity of $E_{0} \supset E^{M}$ implies that $\calH^{n}(B(x,2\delta) \cap E^{M}) \lesssim \delta^n$. 
Fix $x \in F^{M}$ for the moment. Now, let $2^{-j} \geq C_0\delta$ be one of the scales such that \eqref{form1} holds, and choose a point 
\begin{displaymath} y_{j} \in X(x,\theta_0,2^{-j},2^{-j - 1}) \cap E. \end{displaymath}
First, choose $c_0 = c_0(\theta_0) > 0$ so small that we have
\begin{displaymath} B(y_{j},c_02^{-j}) \subset X(x,2\theta_0, 2^{-j + 1},2^{-j - 2}). \end{displaymath}
This is depicted in Figure \ref{fig4}.
\begin{figure}
\begin{center}
\includegraphics[scale = 0.4]{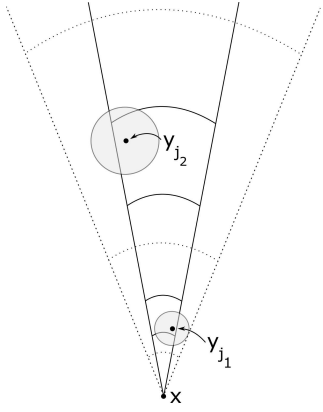}
\caption{Finding the balls $B(y_{j},c2^{-j}) \subset X(x,2\theta_0,2^{-j + 1},2^{-j - 2})$.}\label{fig4}
\end{center}
\end{figure}
Since $y_{j} \in E$, we infer from \eqref{form7} that
\begin{displaymath} \calH^{n}(E' \cap B(y_{j},c_02^{-j-1})) \gtrsim 2^{-jn}. \end{displaymath}
Choosing $C_0 = C_0(\kappa)$ so large that $C_0 \ge 2/c_0$ we have that
\begin{displaymath}
E' \cap B(y_{j},c_02^{-j-1}) \subset \bigcup_{w \in F' \cap B(y_j, c_02^{-j})} B(w,2\delta).
\end{displaymath}
To see this, notice that if $u \in E' \cap B(y_{j},c_02^{-j-1})$,
there is $w \in F'$ so that $|w-u| \le \delta \le 2^{-j}C_0^{-1} \le c_02^{-j-1}$. We now conclude that
\begin{equation}\label{form5} \# (F' \cap B(y_{j}, c_02^{-j})) \gtrsim \delta^{-n}2^{-jn}, \end{equation}
again by the AD-regularity of $E_{0}$. 
In order to use Lemma \ref{GrassmanLemma} we enlarge $C_0 = C_0(\kappa)$ so that for $y \in B(y_{j},c_02^{-j}) \subset X(x,2\theta_0, 2^{-j + 1},2^{-j - 2})$ we have that
\begin{displaymath}
|x-y| \ge 2^{-j-2} \ge \frac{1}{4}C_0\delta \ge \frac{\delta}{\min(\delta_0, \kappa/2)}.
\end{displaymath} 
Hence, Lemma \ref{GrassmanLemma} applies to $z = x - y$ (since $|\pi_{\R^{n}}(z)| \leq 2\theta_{0}|z| = \alpha_{0}|z|$), and so
\begin{displaymath}
\sum_{y \in F' \cap B(y_j, c_02^{-j})} \gamma_{d,n}( B_{x-y} \cap B(\R^n, \kappa)) \gtrsim \delta^{-n}2^{-jn} \Big( \frac{\delta}{2^{-j}} \Big)^{n} = 1,
\end{displaymath}
where $B_{x - y} = B_{z}$ was defined in Lemma \ref{GrassmanLemma}. Then, varying the scale $2^{-j}$, hence the point $y_{j}$, we find $\gtrsim M$ disjoint balls $B(y_j, c_02^{-j})$ inside $X(x, 2\theta_0)$ like above, and therefore
\begin{displaymath}
\sum_{y \in F' \cap X(x, 2\theta_{0})} \gamma_{d,n}( B_{x-y} \cap B(\R^n, \kappa)) \gtrsim M.
\end{displaymath}
This finally yields that
\begin{align*}
\sum_{x, y \in F'} \gamma_{d,n}( B_{x-y} \cap B(\R^n, \kappa)) &\ge \sum_{x \in F^M} \sum_{y \in F' \cap X(x, 2\theta_{0})} \gamma_{d,n}( B_{x-y} \cap B(\R^n, \kappa)) \\
&\gtrsim M \cdot \calH^{n}(E^{M})\delta^{-n}.
\end{align*}

We then go for the upper bound. For $V \in G(d,n)$ define $f_V\colon V \to \R$ by setting
\begin{displaymath}
f_V(z) = \sum_{x \in F'} 1_{B(\pi_Vx, \delta)}(z).
\end{displaymath}
Notice that
\begin{align*}
\int_V f_V(z)^2\,d\calH^n(z) &= \sum_{x,y \in F'} \int_V 1_{B(\pi_Vx, \delta) \cap B(\pi_Vy, \delta)}(z)\,d\calH^n(z) \\
&\gtrsim \delta^n \#\{(x,y) \in F' \times F'\colon\, |\pi_V(x-y)| \le \delta\}. 
\end{align*}
Therefore, we now have that
\begin{align*}
 \calH^{n}(E^{M}) M &\lesssim \delta^n \sum_{x, y \in F'} \gamma_{d,n}( B_{x-y} \cap B(\R^n, \kappa)) \\
 &= \delta^n \int_{B(\R^n, \kappa)} \#\{(x,y) \in F' \times F'\colon\, |\pi_V(x-y)| \le \delta\} \, d\gamma_{d,n}(V) \\
 &\lesssim \int_{B(\R^n, \kappa)}\int_V f_V(z)^2\,d\calH^n(z)\,d\gamma_{d,n}(V).
\end{align*}
Recall that if $x \in F' \subset E'$, then $\calH^{n}(E_{1} \cap B(x,\delta)) \sim \delta^n$ by \eqref{form2}. 
Using this we estimate $f_V$ pointwise:
\begin{align*}
f_V(z) &\le \delta^{-n} \mathop{\sum_{x \in F'}}_{|\pi_Vx-z| \le \delta} \calH^n(E_1 \cap B(x,\delta)) \\
&\lesssim \delta^{-n} \calH^n(E_1 \cap \pi_V^{-1} (B_V(z, 2\delta))) \lesssim M_V(\pi_{V \sharp}\calH^{n}|_{E_1})(z).
\end{align*}
Here $M_{V}$ is the Hardy-Littlewood maximal function on $V$. The operator $M_{V}$ is bounded on $L^{2}(V)$, see Theorem 1 on p. 13 in Stein's book \cite{St}. Therefore, we can conclude the proof as follows, using assumption (ii) from Theorem \ref{main}:
\begin{displaymath}
\calH^{n}(E^{M}) M \lesssim \int_{B(\R^n, \kappa)} \|\pi_{V \sharp}\calH^{n}|_{E_1}\|_{L^2(V)}^{2} \,d\gamma_{d,n}(V) \lesssim 1.
\end{displaymath}
\end{proof}

\section{Proof of the main proposition}
Recall the notation for general cones from Definition \ref{cones}. When $V = \spa(w) \in G(d,1)$, $w \in \Se^{d - 1}$, we introduce shorthand notation for one-dimensional one-sided cones, namely
\begin{displaymath} X^{+}(x,w,\alpha) := X(x,\spa(w),\alpha) \cap \{y \in \R^d \colon\, (y-x) \cdot w \geq 0\}. \end{displaymath}
The restricted version $X^{+}(x,w,\alpha,R,r)$, $0 < r < R < \infty$, is defined in the obvious way. We start with a lemma, which states that cones of arbitrary co-dimension with a fixed aperture can be covered by a bounded number of one-dimensional one-sided cones -- even ones with a slightly smaller aperture:
\begin{lemma}\label{reductionLemma} Fix $\alpha \in (0,1)$, $s \in (0,1]$ and $V \in G(d,k)$. Then, there exist vectors $w_{1},\ldots,w_{m} \in \Se^{d - 1}$, $m \lesssim (\alpha s)^{1 - d}$, such that
\begin{equation}\label{eq:1} X(0,V,\alpha) \subset \bigcup_{j = 1}^{m} X^{+}(0,w_{j},\alpha s) \subset \bigcup_{j = 1}^{m} X^{+}(0,w_{j},\alpha) \subset X(0,V,b\alpha). \end{equation}
Here $b \geq 1$ is a constant depending only on $d$.
\end{lemma}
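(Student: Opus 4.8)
The plan is to take the directions $w_{1}, \ldots, w_{m}$ to be a maximal $(\alpha s)$-separated subset of the spherical ``slab'' $\Se^{d - 1} \cap X(0, V, \alpha)$, and then to verify the three inclusions in \eqref{eq:1} by elementary linear algebra of unit vectors. No deep input is needed; the whole argument is soft.

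First I would fix $\eta := \alpha s \in (0,1]$ and choose $\{w_{1}, \ldots, w_{m}\} \subset \Se^{d - 1} \cap X(0, V, \alpha)$ maximal subject to $|w_{i} - w_{j}| \geq \eta$ for $i \neq j$. For the cardinality bound, the spherical caps $B(w_{j}, \eta/2) \cap \Se^{d - 1}$ are pairwise disjoint, each has $\calH^{d - 1}$-measure $\gtrsim_{d} \eta^{d - 1}$ (using $\eta \leq 1$), and their union is contained in $\Se^{d - 1}$, whose surface measure is $\lesssim_{d} 1$; hence $m \lesssim_{d} \eta^{1 - d} = (\alpha s)^{1 - d}$, as required. (One could even gain a factor $\alpha^{d - k}$ by estimating $\calH^{d - 1}(\Se^{d - 1} \cap X(0, V, \alpha))$ more carefully, but this is not needed for the stated bound.)

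The middle inclusion in \eqref{eq:1} is trivial since $\alpha s \leq \alpha$. For the first inclusion, take $y \in X(0, V, \alpha) \setminus \{0\}$ and put $u := y/|y|$; since $X(0, V, \alpha)$ is invariant under positive scaling, $u \in \Se^{d - 1} \cap X(0, V, \alpha)$, so by maximality there is $w_{j}$ with $|u - w_{j}| < \eta$. Writing $t := u \cdot w_{j}$, the identity $|u - w_{j}|^{2} = 2 - 2t$ forces $t > 1/2 > 0$, and then $|\pi_{\spa(w_{j})^{\perp}} u|^{2} = 1 - t^{2} = (1 - t)(1 + t) \leq 2(1 - t) = |u - w_{j}|^{2} < \eta^{2}$. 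Multiplying through by $|y|$ gives $|\pi_{\spa(w_{j})^{\perp}} y| \leq \alpha s\, |y|$ and $(y - 0) \cdot w_{j} = |y| t > 0$, i.e. $y \in X^{+}(0, w_{j}, \alpha s)$; and the origin lies in every cone, so the first inclusion follows. For the last inclusion, take $y \in X^{+}(0, w_{j}, \alpha)$, so $|\pi_{\spa(w_{j})^{\perp}} y| \leq \alpha |y|$, and recall $|\pi_{V^{\perp}} w_{j}| \leq \alpha$ because $w_{j} \in X(0, V, \alpha) \cap \Se^{d - 1}$. Decomposing $y = (y \cdot w_{j}) w_{j} + \pi_{\spa(w_{j})^{\perp}} y$ and applying $\pi_{V^{\perp}}$, the triangle inequality together with $\|\pi_{V^{\perp}}\| \leq 1$ yields $|\pi_{V^{\perp}} y| \leq |y \cdot w_{j}|\,|\pi_{V^{\perp}} w_{j}| + |\pi_{\spa(w_{j})^{\perp}} y| \leq \alpha |y| + \alpha |y| = 2\alpha |y|$, so $y \in X(0, V, 2\alpha)$, and we may take $b = 2$ (in particular $b$ depends only on $d$).

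I do not expect any genuine obstacle. The only step requiring mild care is the cardinality estimate, where one must use the $(d - 1)$-dimensional surface measure of $\Se^{d - 1}$ rather than a crude volume-packing argument for $\R^{d}$-balls, which would only deliver the weaker bound $m \lesssim \eta^{-d}$.
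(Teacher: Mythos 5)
Your proof is correct and follows essentially the same route as the paper: choose an $(\alpha s)$-net of directions in $\Se^{d-1}\cap X(0,V,\alpha)$ and verify the three inclusions by elementary estimates on unit vectors, with the packing bound $m\lesssim(\alpha s)^{1-d}$. The only cosmetic differences are that you spell out the cap-counting argument the paper leaves implicit and obtain the last inclusion by a direct orthogonal decomposition yielding the explicit constant $b=2$.
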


\begin{proof} It suffices to find $w_{1},\ldots,w_{m} \in \Se^{d - 1}$ such that \eqref{eq:1} holds with all the cones intersected with $\Se^{d-1}$. Find an $(\alpha s)$-net $w_{1},\ldots,w_{m} \subset X(0,V,\alpha) \cap \Se^{d - 1}$. Then $m \lesssim (\alpha s)^{1 - d}$. Now, write $W_{j} := w_{j}^{\perp}$, fix  $y \in X(0,V,\alpha) \cap \Se^{d - 1}$, and pick $w_{j}$ such that $|y - w_{j}| \leq \alpha s$. Then
$y \cdot w_{j} \geq 0$ and
\begin{displaymath} |\pi_{W_{j}}(y)| = |\pi_{W_{j}}(y - w_{j})| \leq \alpha s, \end{displaymath}
which proves that $y \in X^{+}(0,w_{j},\alpha s) \cap \Se^{d - 1}$ and hence the first inclusion of \eqref{eq:1}. To prove the second inclusion, fix $1 \leq j \leq m$ and $z \in X^{+}(0,w_{j},\alpha) \cap \Se^{d - 1}$. Observe that $|z-w_{j}| \leq b'\alpha$ for some constant $b' = b'(d)$, whence
\begin{displaymath} |\pi_{V^{\perp}}(z)| \leq |\pi_{V^{\perp}}(z - w_{j})| + |\pi_{V^{\perp}}(w_{j})| \leq |z - w_{j}| + \alpha \leq (b' + 1)\alpha. \end{displaymath}
This proves the lemma with $b = b' + 1$. \end{proof}

Lemma \ref{reductionLemma} allows us to reduce the proof of the main Proposition \ref{mainProposition} to the "co-dimension $1$" case. The relevant (one-dimensional, one-sided) cones in this co-dimension are directed, which is quite useful in the proof.

\begin{proposition}[Main proposition in co-dimension $1$]\label{mainProp2}
Let $w \in \Se^{d-1}$. Assume that an $\calH^{n}$-measurable set $F \subset \R^{d}$ satisfies the following conditions: 
\begin{itemize}
\item[(a)] $\calH^{n}(F) =: \tau > 0$, and $F \subset E_{0} \cap B(0,1)$ for some $n$-ADR set $E_{0}$,
\item[(b)] for some $\alpha > 0$, $M \in \N$, and for every point $x \in F$ there holds that
\begin{displaymath} \#\{j \in \Z\colon\,X^+(x, w, \alpha,2^{-j},2^{-j - 1}) \cap F \neq \emptyset\} \le M. \end{displaymath}
\end{itemize}
We abbreviate (a) and (b) by saying that that $F$ satisfies the $(\alpha,M,w)$-property. Then, if $M \geq 1$ and $\alpha > 0$ is small enough (depending on only on $d$), there exists a compact set $K \subset F$ with $\calH^{n}(K) \sim_{\tau,\alpha} 1$, which satisfies the $(\alpha/2,M - 1,w)$-property.
\end{proposition}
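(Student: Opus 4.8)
The plan is to obtain $K$ by deleting a ``cone-top layer'' from $F$. First replace $F$ by a compact subset of almost full $\calH^{n}$-measure; this is harmless, since $\calH^{n}|_{F}$ is inner regular and condition~(b) only improves upon passing to subsets. I shall use that the one-sided cones are translates of a fixed \emph{closed convex} circular cone: with $\cC_{\beta}:=\{v\in\R^{d}:v\cdot w\ge\sqrt{1-\beta^{2}}\,|v|\}$ one has $X^{+}(y,w,\beta)=y+\cC_{\beta}$, so ``$z\in X^{+}(y,w,\beta)$'' is a partial order, it composes with no loss of aperture, and --- provided $\beta$ lies below an absolute constant, so that $\cC_{\beta}$ is acute --- one has $|a+b|\ge\max(|a|,|b|)$ for all $a,b\in\cC_{\beta}$. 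For $x\in F$ put
\[
 \sigma(x):=\max\{\,|x-y|:y\in F\cap X^{+}(x,w,\alpha/2)\,\}
\]
(with $\max\emptyset:=0$; the maximum is attained as $F$ is compact and $X^{+}(x,w,\alpha/2)$ is closed). Expanding $|u-x|^{2}=|u-z|^{2}+|z-x|^{2}+2(u-z)\cdot(z-x)$ for $u\in F\cap X^{+}(z,w,\alpha/2)\subset F\cap X^{+}(x,w,\alpha/2)$ and using the two cone facts yields the decay estimate $\sigma(z)\le\tfrac{\sqrt3}{2}\,\sigma(x)$ whenever $z\in X^{+}(x,w,\alpha/2)$ and $|x-z|\ge\tfrac12\sigma(x)$.

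Now set
\[
 D:=\bigl\{\,z\in F:\ \exists\,y\in F\setminus\{z\}\ \text{with}\ z\in X^{+}(y,w,\alpha/2)\ \text{and}\ |y-z|\ge\tfrac12\sigma(y)\,\bigr\},
\]
and let $K$ be a compact subset of $F\setminus D$ of almost full measure (only an upper bound for $\calH^{n}(D)$ will be used, so measurability of $D$ is immaterial). I claim $K$ satisfies the $(\alpha/2,M-1,w)$-property. Fix $x\in K$. By condition~(b), and since $X^{+}(x,w,\alpha/2,R,r)\subset X^{+}(x,w,\alpha,R,r)$ for $0<r<R$, the set $S_{F}:=\{j:X^{+}(x,w,\alpha/2,2^{-j},2^{-j-1})\cap F\ne\emptyset\}$ has at most $M$ elements, and the set $S_{K}$ obtained by replacing $F$ with $K$ satisfies $S_{K}\subset S_{F}$; so it suffices to show $S_{K}\ne S_{F}$ when $S_{F}\ne\emptyset$ (if $S_{F}=\emptyset$ then $S_{K}=\emptyset$ and we are done, as $M\ge1$). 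Let $j_{0}$ be the least element of $S_{F}$, i.e.\ the topmost scale at which $F$ meets the $(\alpha/2)$-cone of $x$; then $\sigma(x)\in[2^{-j_{0}-1},2^{-j_{0}}]$ by the choice of $j_{0}$. If $j_{0}\in S_{K}$, there is $z\in K$ with $z\in X^{+}(x,w,\alpha/2)$ and $2^{-j_{0}-1}\le|x-z|\le2^{-j_{0}}$; but then $|x-z|\ge2^{-j_{0}-1}\ge\tfrac12\sigma(x)$, so $z\in D$ --- contradicting $z\in K\subset F\setminus D$. Hence $j_{0}\notin S_{K}$, so $S_{K}\subsetneq S_{F}$ and $x$ has at most $M-1$ bad scales in $K$, as required.

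Since $\calH^{n}(K)\le\calH^{n}(E_{0}\cap B(0,1))\lesssim1$ is automatic, everything reduces to the lower bound $\calH^{n}(F\setminus D)\gtrsim_{\tau,\alpha}1$, equivalently $\calH^{n}(D)\le(1-c(\alpha))\tau$ for some $c(\alpha)>0$; this is the technical heart of the proposition, and the only place where the thinness of the cones is genuinely exploited. The picture is the following. To each $z\in D$ attach a witness $y_{z}$, so that $\tfrac12\sigma(y_{z})\le|y_{z}-z|\le\sigma(y_{z})$; moreover $F\cap X^{+}(y_{z},w,\alpha/2)\subset\bar B(y_{z},\sigma(y_{z}))$ and no point of $F$ lies above height $\sigma(y_{z})$ in that cone, so $\sigma(y_{z})$ is comparable to the distance from $y_{z}$ to its own topmost $F$-point. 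Thus every dangerous point sits, near its top, inside one of these thin cone slabs over the witnesses; to cover a positive $\calH^{n}$-fraction of $F$ by such slabs one needs the witnesses to be plentiful, yet the witnesses, lying low in their own cones, belong to $F\setminus D$. The decay estimate $\sigma(z)\le\tfrac{\sqrt3}{2}\sigma(y_{z})$ together with~(b) prevents the witnesses from organising into long descending chains, which should bound the overlap multiplicity of the slabs and thereby convert ``plentiful witnesses'' into a genuine gain $\sim c(\alpha)\tau$ over $\tau$, the factor $c(\alpha)$ coming precisely from the $(\alpha/2)$-aperture. Making this Vitali-type packing estimate precise --- in particular, extracting the $\alpha$-gain from the aperture bound and condition~(b) --- is the step I expect to be the main obstacle; everything else above is soft.
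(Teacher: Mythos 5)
Your ``soft'' part is fine: deleting from $F$ all points that lie in the top dyadic annulus of some other point's $(\alpha/2)$-cone does kill one scale for every surviving point, and your verification of this (via $\sigma(x)\in[2^{-j_0-1},2^{-j_0}]$ for the topmost occupied scale $j_0$) is correct, as is the decay estimate $\sigma(z)\le\tfrac{\sqrt3}{2}\sigma(x)$. But the proposition is not the statement that such a deletion reduces $M$; it is the statement that one can do it while keeping $\calH^{n}(K)\gtrsim_{\tau,\alpha}1$, and that is exactly the step you leave as a heuristic (``the main obstacle''). Your one-shot deleted set $D$ is a union of cone-annuli over \emph{all} witnesses $y\in F$, and you offer no mechanism for comparing the measure you delete with the measure you keep: the per-witness ADR bound $\lesssim\alpha^{n-1}\sigma(y)^{n}$ says nothing about the union over uncountably many witnesses (and for $n=1$ it gives no gain at all, since a cone-annulus can carry measure comparable to its diameter). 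The sketched ``no long descending chains, hence bounded overlap of slabs'' argument is precisely the quantitative content that would need to be proved, and it is not clear it can be run in this global form; at the very least it is not soft, and nothing in the proposal constitutes a proof of $\calH^{n}(F\setminus D)\gtrsim_{\tau,\alpha}1$.

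For comparison, the paper does not delete globally at all. It runs an iterative algorithm: at each step it locates a ``bad'' point $x_k$ (a point with the full count $M$ of occupied scales, around which such points have density $\gtrsim\calH^{n}(F)$ at all scales --- a dichotomy supplied by Lemma \ref{auxLemma} through the second stopping condition), chosen with \emph{minimal last coordinate}; it saves the set $S^{k}=B(x_k,r_k)\cap F^{k,M}$, whose measure is comparable (with constant $\sim_{\tau,\alpha}1$) to the measure of the deleted set $D^{k}$, and deletes only the cone-annuli over the controlled ball $B^{k}$. The one-sidedness of the cones together with the monotonicity of the last coordinates of the $x_k$'s guarantees that later deletions never intersect earlier saved sets, so either the deletions or the savings accumulate to a fixed fraction of $\calH^{n}(F)$, and in both cases a set of measure $\gtrsim_{\tau,\alpha}1$ with one fewer scale is produced. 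None of these ingredients (the density dichotomy, the save-versus-delete bookkeeping, the ordering of base points) has a counterpart in your argument, and without some substitute for them the lower bound on $\calH^{n}(K)$ --- the heart of the proposition --- remains unproved. A secondary, fixable point: to extract a compact $K\subset F\setminus D$ of nearly full measure you should address the measurability of $D$ (or work with a measurable hull), which your remark brushes aside.
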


The general idea of the argument is to write down an explicit algorithm, which refines $F$ by deleting some points in several stages, but all the time keeps track that not too much is wasted. When the algorithm eventually stops, it will output the desired set $K$. Before giving the details, let us see how the general version of the main proposition follows from Proposition \ref{mainProp2}.
\begin{proof}[Proof of Proposition \ref{mainProposition}] Assume that $E_{2} \subset E_{0}$ satisfies the $n$-dimensional $(\theta_{0},M_{0})$-property. Then, with $V = (\R^{n})^{\perp}$, 
\begin{displaymath} \# \{j \in \Z : X(x,V,\theta_{0}, 2^{-j}, 2^{-j-1}) \cap E_{2} \neq \emptyset\} \leq M_{0} \end{displaymath}
for all $x \in E_2$. Now, use Lemma \ref{reductionLemma} with $\alpha = \theta_{0}/b$ and $s = 2^{-M_{0}}$ to find vectors $w_{1},\ldots,w_{m} \in \Se^{1}$, $m \lesssim (\theta_{0}/(2^{M_{0}}b))^{1 - d}$, such that
\begin{displaymath} X(0,V,\theta_{0}/b) \subset \bigcup_{j = 1}^{m} X^{+}(0,w_{j},\theta_{0}/(2^{M_{0}}b)) \subset \bigcup_{j = 1}^{m} X^{+}(0,w_{j},\theta_{0}/b) \subset X(0,V,\theta_{0}). \end{displaymath}
It follows from translation invariance that
\begin{equation}\label{eq:3} X(x,V,\theta_{0}/b,2^{-j},2^{-j - 1}) \subset \bigcup_{j = 1}^{m} X^{+}(x,w_{j},\theta_{0}/(2^{M_{0}}b),2^{-j},2^{-j - 1}) \end{equation}
for all $x \in \R^{d}$ and $j \in \Z$. Further, $E_{2}$ satisfies the $(\theta_{0}/b,M_{0},w_{j})$-property from Proposition \ref{mainProp2} for all $1 \leq j \leq m$. Thus, iterating that proposition $\leq M_{0}$ times for each $1 \leq j \leq m$, hence $\leq mM_{0}$ times altogether, one finds a set $E_{3} \subset E_{2}$ satisfying the $(\theta_{0}/(2^{M_{0}}b),0,w_{j})$-property for all $1 \leq j \leq m$. It follows from \eqref{eq:3} that $E_{3}$ satisfies the $n$-dimensional $(\theta_{0}/b,0)$-property, and Proposition \ref{mainProposition} is proved. \end{proof}

\begin{proof}[Proof of Proposition \ref{mainProp2}]
We assume that $w = e_d = (0, 0, \ldots, 0, 1)$.
Before starting to describe the algorithm to find $K \subset F$, we make two easy reductions: first, without loss of generality, we may assume that if $x \in F$ and 
\begin{displaymath}
X^+(x, w, \alpha,2^{-j},2^{-j - 1}) \cap F \neq \emptyset
\end{displaymath}
then $2^{-j} \geq \delta$ for some small constant $\delta > 0$. Simply, for every $x \in F$, there is some $\delta_{x} > 0$ with this property, and then we can take $\delta > 0$ so small that $\calH^{n}(F') \geq \calH^{n}(F)/2$, where $F' := F \setminus \{x \in F : \delta_{x} < \delta\}$. After this, we would proceed with the proof as below, only replacing $F$ by $F'$. Second, we may assume that $F$ is compact; otherwise we can always find a compact subset of $F$ (or $F'$) with almost the same $\calH^{n}$-measure, and then we can find $K$ inside this subset as below.

We now begin to describe the algorithm. The following points (I)--(IV) summarise the key features.
\begin{itemize} 
\item[(I)] There will be a sequence of compact sets $F = F^{0} \supset F^{1} \supset F^{2} \supset \cdots$, where $F^{k + 1}$ is obtained from $F^{k}$ by deleting a certain open set $D^{k}$.
\item[(II)] Thus, there will also be a sequence of \emph{deleted sets} $D^{k} \subset F^{k}$, $k \in \{0,1,\ldots\}$. 
\item[(III)] There will be a sequence of \emph{saved sets} $S^{k} \subset F^{k} \subset F$, $k = \{0,1,\ldots\}$, which are disjoint from each other and all the deleted sets $D^{i}$, $i \geq k$,\footnote{The sets $S^{k}$ are also disjoint from the deleted sets $D^{i}$ with $i < k$, as $S^{k} \subset F^{k} = F \setminus \bigcup_{i < k} D^{i}$.} satisfy
\begin{displaymath} \calH^{n}(S^{k}) \gtrsim \max\{\calH^{n}(D^{k}),\delta^n\}, \end{displaymath}
and have the property that if 
\begin{displaymath} x \in \bigcup_{i \leq k} S^{i}, \end{displaymath}
then there are at most $M - 1$ scales $2^{-j}$ such that
\begin{displaymath} X^+(x,w, \alpha/2,2^{-j},2^{-j - 1}) \cap \bigcup_{i \leq k} S^{i} \neq \emptyset. \end{displaymath} 
\item[(IV)] We describe the structure of the saved sets. Let $F^{k,M}$ be the set of points in $F^{k}$ such that there are exactly $M$ scales $2^{-j} \geq \delta$ such that 
\begin{displaymath} X^+(x,w,\alpha/2,2^{-j},2^{-j - 1}) \cap F^{k} \neq \emptyset. \end{displaymath} 
A point $x \in F^{k}$ is then called \emph{$k$-bad}, if $x \in F^{k,M}$, and furthermore
\begin{displaymath}
\calH^{n}(B(x,r) \cap F^{k,M}) \geq \epsilon r^n
\end{displaymath}
for all radii $0 < r \leq 1$, where $\epsilon \sim \calH^{n}(F)$ is a constant to be specified in Stopping condition \ref{secondStopCondition} below. Using the compactness of $F^{k}$ and the uniform lower bound for the numbers $2^{-j}$, it is easy to verify that the set of $k$-bad points is compact. Thus, if there are any $k$-bad points to begin with, there exists a (possibly non-unique) $k$-bad point $x_{k}$ with the smallest last coordinate $x_{k}^{d}$. With such a choice of $x_{k}$, the saved set $S^{k}$ will be defined as $B(x_{k},r_{k}) \cap F^{k,M}$ for some suitable radius $r_{k} \gtrsim \delta$. 

Note that if $x$ is $k$-bad and $k \geq 1$, then $x$ is also $(k - 1)$-bad, simply because $F^{k} \subset F^{k - 1}$ and $F^{k,M} \subset F^{k - 1, M}$. This implies, by the definition of $x_{k}$, that the last coordinates of the points $x_{0},x_{1},\ldots,x_{k}$ form a non-decreasing sequence. 

Finally, to every set $S^{k} = B(x_{k},r_{k}) \cap F^{k,M}$ we associate a somewhat larger set $B^{k} := B(x_k,100r_{k}) \cap F^{k}$, which will have the property that if $x \in B^{k}$, then there are at most $M - 1$ scales $2^{-j}$ such that
\begin{displaymath}
X^+(x,w,\alpha/2,2^{-j},2^{-j - 1}) \cap F^{k + 1} \neq \emptyset.
\end{displaymath}
\end{itemize}
There will be two different stopping conditions, which bring the algorithm to a halt and output the desired set $K$. 
\begin{stopCondition}\label{firstStopCondition} Assume that the sets $D^{0},\ldots,D^{k}$ and $S^{0},\ldots,S^{k}$ have been defined, and either 
\begin{displaymath} \sum_{i = 0}^{k} \calH^{n}(D^{i}) \geq \calH^{n}(F)/2 \end{displaymath}
or
\begin{displaymath} \sum_{i = 0}^{k} \calH^{n}(S^{i}) \geq \calH^{n}(F)/2. \end{displaymath}
In both cases, we set
\begin{displaymath} K := \bigcup_{i \leq k} S^{i}. \end{displaymath}
By (III), the set $K$ satisfies the requirements of Proposition \ref{mainProp2}, and the proof is complete.
\end{stopCondition}

\begin{stopCondition}\label{secondStopCondition} Assume that the set $F^{k}$ has been defined, and satisfies $\calH^{n}(F^{k}) \geq \calH^{n}(F)/2$, and that the set of $k$-bad points, as in (IV), is empty. Thus, for every $x \in F^{k,M}$, we have
\begin{displaymath}
\calH^{n}(F^{k,M} \cap B(x,r_{x})) \leq \epsilon r_{x}^n
\end{displaymath}
for some radius $0 < r_{x} \leq 1$. Now choose $\epsilon \sim \calH^{n}(F) \sim 1$ so small that, using Lemma \ref{auxLemma}, we have $\calH^{n}(F^{k,M}) \leq \calH^{n}(F)/4$. We set
\begin{displaymath} K := F^{k} \setminus F^{k,M}. \end{displaymath}
Then, $\calH^{n}(K) \geq \calH^{n}(F^{k}) - \calH^{n}(F^{k,M}) \geq \calH^{n}(F)/4$, and for every $x \in K$ there are at most $M - 1$ scales $2^{-j}$ such that
\begin{displaymath}
X^+(x,w,\alpha/2,2^{-j},2^{-j - 1}) \cap K \neq \emptyset.
\end{displaymath}
Thus, $K$ satisfies the requirements of Proposition \ref{mainProp2}, and the proof is complete. 
\end{stopCondition}

\begin{remark} Notice that, since $\calH^{n}(S^{k}) \gtrsim \delta^n$ for every $k$, the first stopping condition will be reached in $\lesssim \delta^{-n}$ steps (unless the second stopping condition was reached before that). In particular, the algorithm terminates and outputs $K$ after finitely many steps.
\end{remark}

Next, we will explicitly describe how to construct the various sets $F^{k}$, $S^{k}$ and $D^{k}$. Define $S^{-1} = \emptyset$, $D^{-1} = \emptyset$ and $F^{0} := F$. Assume that $k \geq 0$ and the sets $F^{0},\ldots,F^{k}$, $D^{1},\ldots,D^{k - 1}$ and $S^{1},\ldots,S^{k - 1}$ have already been defined, and satisfy the properties listed in (I)--(IV); in particular, also the balls $B^{i}$, $i < k$, have been defined. Assume that the first stopping condition is not satisfied; otherwise the algorithm terminates and the proof is complete. In particular,
\begin{equation}\label{form8}
\calH^{n}(F^{k}) \geq \calH^{n}(F) - \sum_{i < k} \calH^{n}(D^{i}) \geq \calH^{n}(F)/2.
\end{equation}
Next, assume that the second stopping condition is not satisfied; because of \eqref{form8}, this means that the set of $k$-bad points is non-empty, and -- as required by (IV) -- we find one of them, $x_{k}$, with minimal last coordinate. Let $2^{-j_k}$ be one of the $M$ scales such that
\begin{equation}\label{form18} X^+(x_{k},w, \alpha/2,2^{-j_k},2^{-j_k - 1}) \cap F^{k} \neq \emptyset. \end{equation}
Let $r_{k} := c2^{-j_k}$ for a suitable small $c = c(d,\alpha)> 0$ to be specified later, and set
\begin{displaymath} S^{k} := B(x_{k},r_{k}) \cap F^{k,M}, \qquad B^{k} := B(x_{k},100r_{k}) \cap F^{k}, \end{displaymath}
as required by (IV). Then 
\begin{equation}\label{form11}
\calH^{n}(S^{k}) \gtrsim 2^{-j_kn} \geq \delta^n,
\end{equation}
by the definition of $k$-badness. Furthermore, $S^{k}$ is disjoint from all the previous sets $S^{i}$, $i < k$, and even the larger sets $B^{i}$, $i < k$, because $S^{k} \subset F^{k,M}$, but if $x \in B^{i}$, then 
\begin{equation}\label{form19}
X^+(x,w,\alpha/2,2^{-m},2^{-m - 1}) \cap F^{k} \neq \emptyset
\end{equation}
can only hold for $M - 1$ scales $2^{-m}$ by (IV). Next, we define the deleted set $D^{k}$ by
\begin{displaymath} D^{k} := F^{k} \cap \bigcup_{l \in \{-1,0,1\}} \bigcup_{x \in B^{k}} X^{\circ}(x,w,\alpha,2^{-j_k + l}, 2^{-j_k - 1 + l}). \end{displaymath}
Here $X^{\circ}$ stands for the interior of the cone $X^+$ (we want the deleted set to be relatively open in $F^{k}$). Then $D^{k}$ is contained in a single ball of radius $\lesssim 2^{-j_k}$, so $\calH^{n}(D^{k}) \lesssim 2^{-j_kn}$. Combining this with \eqref{form11}, we see that
\begin{displaymath} \calH^{n}(S^{k}) \gtrsim \max\{\calH^{n}(D^{k}), \delta^n\}, \end{displaymath}
as required in (III).

To complete the proof, we still need to show the disjointness of $D_{k}$ from the previous saved sets $S^{i}$, $i < k$, the latter claim in (III) about $\bigcup_{i \leq k} S^{i}$, and the claim about the set $B^{k}$ at the end of (IV). We begin with the last and easiest task. By the definition of $2^{-j_{k}}$ in \eqref{form18}, there exists a point
\begin{displaymath} z_{k} \in X^+(x_{k},w,\alpha/2,2^{-j_k},2^{-j_k - 1}) \cap F^{k}. \end{displaymath}
Now, if the constant $c$ in $r_{k} = c2^{-j_k}$ is chosen small enough (depending on $\alpha$), and $x \in B^{k} \subset B(x_{k},100r_{k})$, one can check that 
\begin{displaymath} z_{k} \in \bigcup_{l \in \{-1,0,1\}} X^+(x,w,\alpha,2^{-j_k + l}, 2^{-j_k - 1 + l}), \end{displaymath}
\begin{figure}[h!]
\begin{center}
\includegraphics[scale = 0.7]{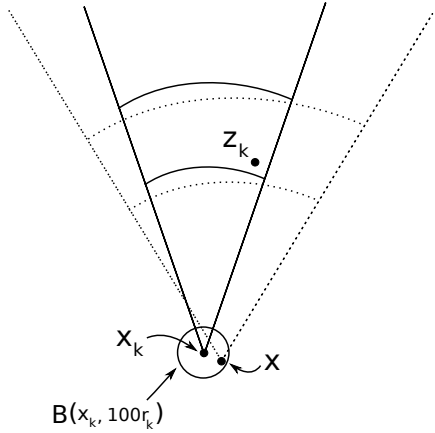}
\caption{Some of the points and regions associated with $D^{k}$.}\label{fig1}
\end{center}
\end{figure}
see Figure \ref{fig1}. In particular, one of the three scales $2^{-j_k + l}$, $l \in \{-1,0,1\}$, is among the at most $M$ scales $2^{-m}$ such that $X^+(x,w,\alpha,2^{-m}, 2^{-m - 1}) \cap F^{k} \neq \emptyset$. Then $D^{k}$ certainly contains all the points in the intersection $X^+(x,w,\alpha/2,2^{-m},2^{-m - 1}) \cap F^{k}$, so
\begin{displaymath} X^+(x,w,\alpha/2,2^{-m},2^{-m - 1}) \cap F^{k + 1} = X^+(x,w,\alpha/2,2^{-m},2^{-m - 1}) \cap (F^{k} \setminus D^{k}) = \emptyset. \end{displaymath}
Thus, there can only remain at most $M - 1$ scales $2^{-m}$ such that
\begin{equation}\label{form20}
X^+(x,w,\alpha/2,2^{-m},2^{-m - 1}) \cap F^{k + 1} \ne \emptyset, \quad x \in B^{k},
\end{equation}
and this is exactly what is claimed at the end of (IV).

Finally, we establish the remaining claims in (III) by proving that $D^{k}$ is disjoint from the saved sets $S^{i}$, $i \leq k$. In fact, this implies that $S^{i} \cap D^{l} = \emptyset$ for all pairs $i,l \leq k$. Indeed, if $i \leq l < k$, we may assume by induction that $S^{i}$ is disjoint from $D^{l}$ (since this is precisely what we are about to prove for $l = k$). Further, $S^{i}$ is disjoint from $D^{l}$ with $l < i$ simply because
\begin{displaymath} S^{i} \subset F^{i} = F \setminus \bigcup_{l < i} D^{l}. \end{displaymath}
From the previous discussion, we conclude that
\begin{equation}\label{form14} \bigcup_{i \leq k} S^{i} \subset F \setminus \bigcup_{l \leq k} D^{l} = F^{k + 1}. \end{equation}
Observe that for every $x \in \bigcup_{i \leq k} S^{i}$ there are at most $M - 1$ scales $2^{-j}$ such that 
\begin{displaymath} X^+(x,w,\alpha/2,2^{-j},2^{-j - 1}) \cap F^{k + 1} \neq \emptyset. \end{displaymath} 
This follows from \eqref{form20} for $x \in S^{k} \subset B^{k}$, and from induction for $x \in S^{i} \subset B^{i}$ for $i < k$ (recalling \eqref{form19} and noting that $F_{k + 1} \subset F_{k}$). Hence, we infer from the inclusion \eqref{form14} that there are also at most $M - 1$ scales $2^{-j}$ such that
\begin{displaymath} X^+(x,w,\alpha/2,2^{-j},2^{-j - 1}) \cap \bigcup_{i \leq k} S^{i} \neq \emptyset \end{displaymath} 
for $x \in \bigcup_{i \leq k} S^{i}$. This is what was claimed at the end of (III). 
\begin{figure}[h!]
\begin{center}
\includegraphics[scale = 0.7]{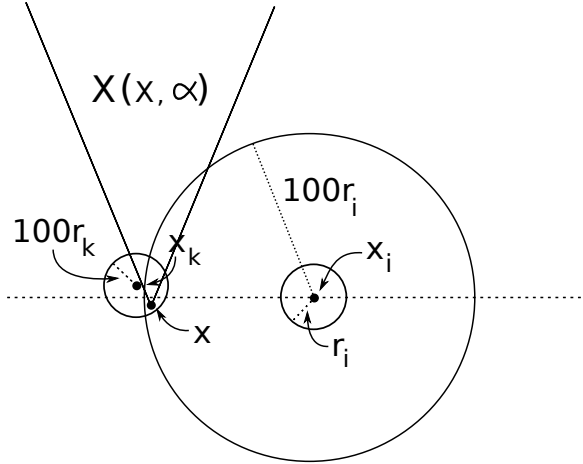}
\caption{The case $100r_{k} \leq r_{i}$.}\label{fig2}
\end{center}
\end{figure}
Now, we fix $i \leq k$, and establish that $D^{k}$ is disjoint from $S^{i}$. If $i = k$, this is immediate from the construction (recall that $S^{k} \subset B(x_{k},r_{k})$, whereas $D^{k}$ lies inside the union of certain annuli, all at distance $\gg r_{k}$ from $x_{k}$). So, we assume that $i < k$. 
There are two cases to consider. First, assume that $100r_{k} \leq r_{i}$ (see Figure \ref{fig2}). In this case, we simply prove that if $x \in B^{k} \subset B(x_{k},100r_{k}) \subset B(x_{k},r_{i})$, then
\begin{equation}\label{form15} X^+(x,w,\alpha) \cap B(x_{i},r_{i}) = \emptyset, \end{equation}
which is clearly a stronger statement than $D^{k} \cap S^{i} = \emptyset$.
Fix $x \in B^{k}$, and recall that $x_{k} \notin B^{i} = B(x_{i},100r_{i}) \cap F^{i}$ for $i < k$ (because $x_{k} \in S_{k}$, and $S_{k}$ is disjoint from $B^{i}$, as remarked below \eqref{form11}). Because $x_{k} \in S^{k} \subset F^{k} \subset F^{i}$, this implies that $x_{k} \notin B(x_{i},100r_{i})$, and hence, by $100r_{k} \leq r_{i}$,
\begin{equation}\label{form12} x \notin B(x_{i},50r_{i}). \end{equation}
Now, recall that the last coordinate of $x_{k}$ is no smaller than the last coordinate of $x_{i}$ by (IV). So, if we write $y = (y^{u})_{u=1}^d$ for a general point $y \in \R^d$, we have
\begin{equation}\label{form13} x^{d} \geq x_{k}^{d} - r_{i} \ge x_i^d - r_i. \end{equation}
It is now easy to check, based on \eqref{form12} and \eqref{form13} that \eqref{form15} holds, if we assume that, say, $\alpha \leq 1/10$.
\begin{figure}[h!]
\begin{center}
\includegraphics[scale = 0.3]{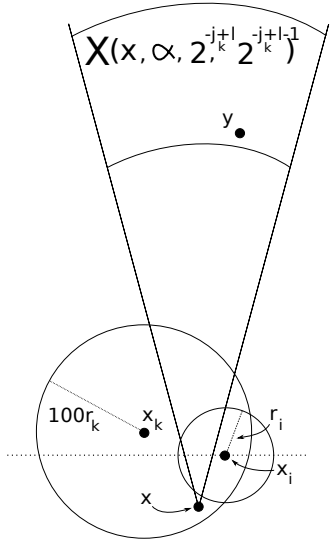}
\caption{The case $100r_{k} > r_{i}$.}\label{fig3}
\end{center}
\end{figure} 

Next, assume that $100r_{k} > r_{i}$ (see Figure \ref{fig3}). Recall that $r_{k} = c2^{-j_k}$, and $D^{k}$ is contained in the union of the annuli $X^{\circ}(x,w,\alpha,2^{-j_k + l}, 2^{-j_k - 1 + l})$, where $x \in B^{k} \subset B(x_{k},100r_{k})$ and $l \in \{-1,0,1\}$. If $x \in B^{k}$ is fixed, then by the same argument that gave \eqref{form13}, we now have
\begin{equation}\label{form16} x^{d} \geq x_{k}^{d} - 100r_{k} \geq x_{i}^{d} - 100c2^{-j_k}. \end{equation} 
If
\begin{displaymath} y  \in X^{\circ}(x,w,\alpha,2^{-j_k + l},2^{-j_k - 1 + l}), \end{displaymath}
then, using \eqref{form16} and choosing $c > 0$ small enough,
\begin{displaymath} y^{d} \geq x^{d} + 2^{-j_k - 10} \geq x_{i}^{d} + 2^{-j_k - 20} \geq x_{i}^{d} + 1000c2^{-j_k} \geq x_{i}^{d} + 10r_{i}. \end{displaymath}
In particular, $y$ cannot lie in $B(x_{i},r_{i}) \supset S^{i}$, and the proof is complete. 
 \end{proof}

\appendix
\section{A measure estimate on the Grassmannian}\label{A1}
This section contains the proof of Lemma \ref{GrassmanLemma}. Let us recall the statement:
\begin{lemma}
Fix $\upsilon, \delta > 0$ and let $W \in G(d,n)$. Assume that $z \in \R^d$ satisfies $\delta/|z| < \min(\delta_0, \upsilon/2)$, where $\delta_{0}$ is a small constant depending only on $d$, and $|\pi_{W}z| \le \alpha_0|z|$ for a small enough $\alpha_0 = \alpha_0(n, \upsilon) > 0$.
Define $B_z = \{V \in G(d,n)\colon\, |\pi_Vz| \le \delta\}$.
Then,
\begin{displaymath}
A(z) := \gamma_{d,n}(B_z \cap B_{G(d,n)}(W, \upsilon)) \gtrsim_{\upsilon} \Big(\frac{\delta}{|z|}\Big)^{n}.
\end{displaymath}
\end{lemma}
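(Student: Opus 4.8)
The plan is to reduce the estimate, via a single graph‑coordinate chart of $G(d,n)$ around $W$, to an elementary volume estimate for an affine slab. First, since $B_{\lambda z} = \{V : |\pi_V z| \le \delta/|\lambda|\}$ and since both hypotheses as well as the claimed inequality are unaffected by simultaneously replacing $z$ by $z/|z|$ and $\delta$ by $\delta/|z|$, we may assume $|z| = 1$; thus $\delta < \min(\delta_0,\upsilon/2)$, the vector $a := \pi_W z$ satisfies $|a| \le \alpha_0$, and $b := \pi_{W^\perp}z$ satisfies $|b| = \sqrt{1 - |a|^2} \ge \tfrac12$ (we will take $\delta_0 \le \tfrac18$ dimensional and $\alpha_0 \le c(n)\upsilon$, and one may also assume $\upsilon \le 1$, noting that $\delta < \delta_0 \le \tfrac18$ holds unconditionally). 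The goal becomes $\gamma_{d,n}(B_z \cap B(W,\upsilon)) \gtrsim_\upsilon \delta^n$.

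Parametrise a neighbourhood of $W$ by $L \mapsto V_L := \{v + Lv : v \in W\}$, with $L$ ranging over the $n(d-n)$‑dimensional space of linear maps $W \to W^\perp$; this is a smooth chart with $\|\pi_{V_L} - \pi_W\| \le c_d\|L\|$ for $\|L\| \le 1$, and by $O(d)$‑invariance the pushforward of $\gamma_{d,n}$ has density bounded below by a dimensional constant on $\{\|L\| \le 1\}$. Solving the normal equations for the orthogonal projection onto $V_L$ gives $\pi_{V_L}z = v_* + Lv_*$ with $(I + L^*L)v_* = a + L^*b$, hence
\begin{displaymath}
|\pi_{V_L}z|^2 = \langle a + L^*b,\, (I + L^*L)^{-1}(a + L^*b)\rangle \le |a + L^*b|^2
\end{displaymath}
for every $L$, since $I + L^*L \succeq I$. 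Consequently $V_L \in B_z$ whenever $|a + L^*b| \le \delta$, so it suffices to bound from below the Lebesgue measure of $\{L : |a + L^*b| \le \delta,\ \|L\| \le \eta\}$ for a suitable $\eta = \eta(d,\upsilon) \sim_d \upsilon$, small enough that $\|L\| \le \eta$ forces $\|L\| \le 1$ and $V_L \in B(W,\upsilon)$.

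The map $L \mapsto L^*b$ is a linear surjection onto $W$: given $u \in W$, the rank‑one map $L^* = u\, b^*/|b|^2$ has $L^*b = u$ and $\|L^*\| = |u|/|b| \le 2|u|$. Hence there is $L_0$ with $L_0^*b = -a$ and $\|L_0\| \le 2|a| \le 2\alpha_0$, and writing $L = L_0 + N$ turns the condition into $|N^*b| \le \delta$. Choosing an orthonormal basis of $W^\perp$ whose first vector is $b/|b|$, the map $N \mapsto N^*b$ reads off a multiple of the first row of the matrix of $N$; thus $\{N : |N^*b| \le \delta,\ \|N\| \le \eta/2\}$ contains the product box in which the first row lies in a ball of radius $\min(2\delta,\eta/16)$ and each of the remaining $d-n-1$ rows in a ball of radius $\eta/(4\sqrt d)$, which keeps $\|N\| \le \|N\|_{\mathrm{HS}} \le \eta/2$ and, with $\|L_0\| \le 2\alpha_0 \le \eta/4$, keeps $\|L\| \le \eta$. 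The key point is that $\delta < \upsilon/2$ together with $\eta \sim_d \upsilon$ gives $\min(2\delta,\eta/16) \gtrsim_d \delta$, so the box has Lebesgue measure $\gtrsim_d \delta^n \cdot \upsilon^{n(d-n-1)} \gtrsim_\upsilon \delta^n$. Transporting this through the chart and using the density lower bound yields $\gamma_{d,n}(B_z \cap B(W,\upsilon)) \gtrsim_\upsilon \delta^n$, which is the assertion after undoing the normalisation; the smallness of $\delta_0$ and $\alpha_0$ enters only in the routine verifications above (reaching $-a$ within the norm budget, $|b|\ge\tfrac12$, absorbing $\|L_0\|$).

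The one ingredient that is not pure linear algebra is the claim that $\gamma_{d,n}$ has, in graph coordinates, a density bounded below by a dimensional constant on $\{\|L\|\le 1\}$; this follows from the description of $\gamma_{d,n}$ as the $O(d)$‑invariant probability measure on the homogeneous space $G(d,n)$, so that the Jacobian of the chart is a fixed smooth positive function of $L$ (independent of $W$ after a rotation), and it can alternatively be read off from the explicit density formula in these coordinates. Beyond that, the only care required is in choosing how small $\delta_0$, $\alpha_0$, $\eta$, and the box radii must be in terms of $d$ and $\upsilon$, which is the expected, purely bookkeeping, obstacle.
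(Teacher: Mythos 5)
Your argument is correct in substance but follows a genuinely different route from the paper's. The paper first constructs, by perturbing an orthonormal basis of $W$ (a Gram--Schmidt argument against $z/|z|$), a plane $V_0 \in B_{G(d,n)}(W,\upsilon/2)$ with $\pi_{V_0}z = 0$; it then takes a maximal $(\delta/|z|)$-separated net $H$ inside a ball of the sub-Grassmannian $\{V : \pi_V z = 0\} \cong G(d-1,n)$ and sums the $\gamma_{d,n}$-measures of the disjoint balls $B_{G(d,n)}(V,\delta/(2|z|))$, $V \in H$, using the ball-measure asymptotics $\gamma_{d,k}(B(V,r)) \sim r^{k(d-k)}$ from \cite{FO} twice (once to count $H$, once for the small balls). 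You instead work in a single graph chart $L \mapsto V_L$ around $W$, linearise the constraint through the exact identity $|\pi_{V_L}z|^2 = \langle a+L^*b,(I+L^*L)^{-1}(a+L^*b)\rangle \le |a+L^*b|^2$, translate by a rank-one $L_0$ with $L_0^*b=-a$, and bound below the Lebesgue measure of the resulting slab intersected with $\{\|L\|\le\eta\}$, $\eta\sim_d\upsilon$; the only non-elementary input is that $\gamma_{d,n}$ has, in these coordinates, a density bounded below by a dimensional constant on $\{\|L\|\le 1\}$, which is standard (e.g.\ via the explicit density $\propto\det(I+L^*L)^{-d/2}$). This buys a shorter, more self-contained proof with no net counting and no appeal to \cite{FO}; the price is only cosmetic: your $\alpha_0$ comes out depending on $d$ (through $\eta$) rather than on $n$ alone as in the statement, which is immaterial for the application, since Lemma \ref{mainLemma} anyway allows $\theta_0=\theta_0(\kappa,d)$. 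One small slip to fix: since $|b|\le 1$, the condition $|N^*b|\le\delta$ asks that the first row of $N$ have norm at most $\delta/|b|\in[\delta,2\delta]$, so the product box is guaranteed to lie inside the slab only if its first-row radius is at most $\delta$; as written, $\min(2\delta,\eta/16)$ can exceed $\delta/|b|$ when $2\delta\le\eta/16$. Replacing $2\delta$ by $\delta$ costs only a factor $2^{n}$, and the key point survives: $\delta<\upsilon/2$ and $\eta\sim_d\upsilon$ still give $\min(\delta,\eta/16)\gtrsim_d\delta$, so the conclusion $A(z)\gtrsim_\upsilon(\delta/|z|)^n$ stands.
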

\begin{proof}
Let us begin by showing that there exists $V_0 \in B_{G(d,n)}(W, \upsilon/2)$ for which $\pi_{V_0}z = 0$. Let $e_1, \ldots, e_n \in \R^d$
be an orthonormal basis for $W$. Notice that for every $j \in \{1, \ldots, n\}$ we have that
\begin{equation*}
|z \cdot e_j| \le |\pi_W z| \le \alpha_0|z|.
\end{equation*}
The plan is now to form new vectors $u_{1},\ldots,u_{n}$ by perturbing the vectors $e_{1},\ldots,e_{n}$ slightly. Let $u_0 := e_{0} := z/|z|$, $Z_{-1} := \emptyset$ and $\epsilon_0 := \alpha_{0}$.
Assume that $u_0, \ldots, u_k$, $0 \leq k < n$, have already been defined so that they satisfy:
\begin{enumerate}
\item $(u_0, u_1, \ldots, u_k)$ is an orthonormal sequence;
\item If $Z_i = \textup{span}(u_0, u_1, \ldots, u_i)$ for $i \in \{0, \ldots, k\}$ then 
\begin{displaymath}
u_{i} = \frac{\pi_{Z_{i-1}^{\perp}}e_{i}}{|\pi_{Z_{i-1}^{\perp}}e_{i}|}
\end{displaymath}
and
\begin{equation}\label{eq:5}
|\pi_{Z_i}e_{i+1}| \le \epsilon_i.
\end{equation}
\end{enumerate}
As will be apparent in a moment, the numbers $\epsilon_{i}$ will be defined via a simple recurrence relation. Observe that \eqref{eq:5} also gives
\begin{displaymath}
|\pi_{Z_{i}^{\perp}}e_{i+1}| \ge (1-\epsilon_{i}^2)^{1/2}, \qquad i \in \{0, \ldots, k\}.
\end{displaymath}

Now, let 
\begin{displaymath}
u_{k+1} = \frac{\pi_{Z_{k}^{\perp}}e_{k+1}}{|\pi_{Z_{k}^{\perp}}e_{k+1}|}
\end{displaymath}
and $Z_{k+1} = \textup{span}(Z_{k}, u_{k+1})$. If $k = n - 1$, the vectors $\{u_{1},\ldots,u_{n}\}$ have now been defined, and the induction terminates. If $k < n - 1$, notice that
\begin{align*}
|\pi_{Z_{k+1}}e_{k+2}|^2 &=  \sum_{i=0}^{k+1} (e_{k+2} \cdot u_i)^2 \\
& = (e_{k+2} \cdot u_0)^2 +  \sum_{i=1}^{k+1} \Big(e_{k+2} \cdot \frac{\pi_{Z_{i-1}^{\perp}}e_{i}}{|\pi_{Z_{i-1}^{\perp}}e_{i}|} \Big)^2 \\
&=  (e_{k+2} \cdot u_0)^2 +  \sum_{i=1}^{k+1} \Big(e_{k+2} \cdot \frac{\pi_{Z_{i-1}}e_{i}}{|\pi_{Z_{i-1}^{\perp}}e_{i}|} \Big)^2 \\
&\le \alpha_0^2 + \sum_{i=1}^{k+1} \frac{\epsilon_{i-1}^2}{1-\epsilon_{i-1}^2} =: \epsilon_{k + 1}^2.
\end{align*}
Now, properties (1) and (2) have been verified for the vectors $u_{0},\ldots,u_{k + 1}$.

Define $V_0 = \textup{span}(u_1, \ldots, u_n)$. By (1) it follows that $\pi_{V_0}z = 0$. Therefore, it remains to show that
$V_0 \in B_{G(d,n)}(W, \upsilon/2)$ for a small enough $\alpha_0$. First, it is easy to check using the definition of $u_i$ that
\begin{align*}
u_i - e_i &= \frac{-\pi_{Z_{i-1}} e_i}{|\pi_{Z_{i-1}^{\perp}}e_i|} + \Big( \frac{1}{|\pi_{Z_{i-1}^{\perp}}e_i|} - 1\Big)e_i.
\end{align*}
It follows that
\begin{displaymath}
|u_i - e_i| \le \frac{\epsilon_{i-1}}{(1-\epsilon_{i-1}^2)^{1/2}} +  \Big( \frac{1}{(1-\epsilon_{i-1}^2)^{1/2}} - 1\Big) =: r_i.
\end{displaymath}
Choose $\alpha_0$ so small that $\max\{r_i\colon i = 1, \ldots,n\} \le \upsilon/(4n^{1/2})$; this can clearly be done, given that the numbers $\epsilon_{i}$ satisfy the recurrence relation above. It follows that
\begin{displaymath}
\|W-V_0\|_{G(d,n)} = \|\pi_W-\pi_{V_0}\| \le \upsilon/4,
\end{displaymath}
ending the proof of the existence of $V_0$.

Let $F = \{V \in G(d,n)\colon \pi_Vz = 0\}$ and identify $F$ with $G(d-1,n)$ (notice that $F$ is exactly the $n$-planes contained in $z^{\perp} \approx \R^{d-1}$). Now $V_0 \in F$. Let $H$
be a maximal $\delta/|z|$-separated collection of $V \in B_{G(d-1,n)}(V_0, \upsilon/2)$. Then
$B_{G(d-1,n)}(V_0, \upsilon/2) \subset \bigcup\{ B_{G(d-1,n)}(V, 2\delta/|z|)\colon\, V \in H\}$ yielding that
\begin{displaymath}
1 \lesssim_{\upsilon} \#H \cdot \Big(\frac{\delta}{|z|}\Big)^{n(d-1-n)}.
\end{displaymath}
Here we used that $\gamma_{d-1,n}( B_{G(d-1,n)}(V_0, \upsilon/2)) \sim_{\upsilon} 1$ and Proposition 4.1 of \cite{FO} (we also implicitly used $\delta/|z| < \delta_{0}$, which, for small enough $\delta_{0}$, guarantees that Proposition 4.1 of \cite{FO} applies to the balls $B_{G(d - 1,n)}(V,2\delta/|z|)$).
Notice that
\begin{displaymath}
\bigcup_{V \in H} B_{G(d,n)}(V, \delta/(2|z|)) \subset B_z \cap B_{G(d,n)}(W, \upsilon)
\end{displaymath}
by the definition of $B_{z}$, the inclusion $H \subset F$, and the inequalities $\delta/(2|z|) \leq \upsilon/4$, and
\begin{displaymath} \|V - W\|_{G(d,n)} \leq \|V - V_{0}\|_{G(d - 1,n)} + \|V_{0} - W\|_{G(d,n)} \leq \frac{3\upsilon}{4}, \qquad V \in H. \end{displaymath}
Also, the $G(d,n)$-balls in the union are disjoint, so
\begin{displaymath}
A(z) \geq \sum_{V \in H} \gamma_{d,n}(B_{G(d,n)}(V,\delta/(2|z|))) \gtrsim_{\upsilon} \Big(\frac{\delta}{|z|}\Big)^{n(d-n)} \cdot \Big(\frac{\delta}{|z|}\Big)^{-n(d-1-n)} = \Big(\frac{\delta}{|z|}\Big)^n,
\end{displaymath}
using the cardinality estimate for $H$, and Proposition 4.1 of \cite{FO} again.
\end{proof}

\begin{bibdiv}
\begin{biblist}

\bib{AS}{article}{
   author = {Azzam, Jonas},
   author = {Schul, Raanan},
   title = {Hard Sard: quantitative implicit function and extension theorems for Lipschitz maps},
   journal={Geom. Funct. Anal.},
   volume={22},
   date={2012},
   number={5},
   pages={1062--1123},
}

\bib{DS1}{book}{
   author={David, Guy},
   author={Semmes, Stephen},
   title={Analysis of and on uniformly rectifiable sets},
   series={Mathematical Surveys and Monographs},
   volume={38},
   publisher={American Mathematical Society, Providence, RI},
   date={1993},
   pages={xii+356},
}

\bib{DS3}{article}{
   author={David, Guy},
   author={Semmes, Stephen},
   title={Singular integrals and rectifiable sets in ${\bf R}\sp n$: Beyond
   Lipschitz graphs},
    journal={Ast\'erisque},
   number={193},
   date={1991},
   pages={152},
   issn={0303-1179},
}

\bib{DS2}{article}{
   author={David, Guy},
   author={Semmes, Stephen},
   title={Quantitative rectifiability and Lipschitz mappings},
   journal={Trans. Amer. Math. Soc.},
   volume={337},
   date={1993},
   number={2},
   pages={855--889},
   issn={0002-9947},
}

\bib{FO}{article}{
   author ={F\"assler, Katrin},
   author = {Orponen, Tuomas},
   title = {Constancy results for special families of projections},
   journal = {Math. Proc. Cambridge Philos. Soc.}
   volume = {154},
   number = {3},
   year = {2013},
   pages = {549--568},
}

\bib{Fe}{book}{
   author = {Federer, Herbert}
   title = {Geometric Measure Theory}
   publisher = {Springer-Verlag}
   date = {1969}
   }

\bib{Mat}{book}{
   author={Mattila, Pertti},
   title={Geometry of sets and measures in Euclidean spaces: Fractals and rectifiability},
   series={Cambridge Studies in Advanced Mathematics},
   volume={44},
   publisher={Cambridge University Press, Cambridge},
   date={1995},
   pages={xii+343},
   isbn={0-521-46576-1},
   isbn={0-521-65595-1},
   doi={10.1017/CBO9780511623813},
}

\bib{St}{book}{
    author = {Stein, Elias M.}
    title = {Harmonic Analysis: Real-variable Methods, Orthogonality, and Oscillatory Integrals}
    publisher = {Princeton University Press, Princeton, New Jersey}
    date = {1993}
    isbn = {0-691-03216-5}
}

\bib{Ta}{article}{
   author = {Tao, Terence},
   title = {A quantitative version of the Besicovitch projection theorem via multiscale analysis},
   journal = {Proc. London Math. Soc.},
   date = {2009},
   volume = {98},
   number = {3},
   pages = {559--584},
}   

\end{biblist}
\end{bibdiv}
\end{document}